\newtheorem{theorem}{Theorem}[section]
\newtheorem{lemma}[theorem]{Lemma}
\newtheorem{corollary}{Corollary}[theorem]
\newtheorem*{remark}{Remark}
\newtheorem{prop}{Proposition}[section]
\title{Generators for the  moduli space of parabolic bundle}
\author{Lisa Jeffrey and Yukai Zhang}
\address{Mathematics Department, University of Toronto, Toronto, Ontario, Canada}
\def\bbordermatrix#1{\begingroup \m@th
  \@tempdima 4.75\p@
  \setbox\z@\vbox{%
    \def\cr{\crcr\noalign{\kern2\p@\global\let\cr\endline}}%
    \ialign{$##$\hfil\kern2\p@\kern\@tempdima&\thinspace\hfil$##$\hfil
      &&\quad\hfil$##$\hfil\crcr
      \omit\strut\hfil\crcr\noalign{\kern-\baselineskip}%
      #1\crcr\omit\strut\cr}}%
  \setbox\tw@\vbox{\unvcopy\z@\global\setbox\@ne\lastbox}%
  \setbox\tw@\hbox{\unhbox\@ne\unskip\global\setbox\@ne\lastbox}%
  \setbox\tw@\hbox{$\kern\wd\@ne\kern-\@tempdima\left[\kern-\wd\@ne
    \global\setbox\@ne\vbox{\box\@ne\kern2\p@}%
    \vcenter{\kern-\ht\@ne\unvbox\z@\kern-\baselineskip}\,\right]$}%
  \null\;\vbox{\kern\ht\@ne\box\tw@}\endgroup}
\begin{document}
\maketitle
\section{Introduction}
The purpose of this note is to find explicit representatives in deRham cohomology for the generators of the cohomology of the moduli space of parabolic bundles, analogous to the results of \cite{groupcoho} for the moduli space of vector bundles. Further we use the explicit generators to compute the intersection pairing of its cohomology.\\
We intend to extend this study to the situation where the group $G$ is noncompact (for example $G = PSL(2, {\bf R})$, which is important for Teichm\"uller space and the moduli space of curves).
The techniques will be different for noncompact $G$, but one  might at least be able to study the symplectic volume  and these explicit differential forms would be useful for that purpose.\\
In section 2, we briefly discuss the definition of the moduli space of parabolic bundles through the extended moduli space defined in \cite{groupcoho}. Further we also estimate the cohomology of the extended moduli space using equivariant cohomology.\\
In section 3, we quickly review the Chern-Weil homomorphism and Chern classes.\\
In section 4 and 5, we apply the Leray-Hirsch theorem to the principal bundle $M \to M/G$ and relate it with the  Chern classes of the bundle.\\
In section 6, we apply the results from section 4 and 5 to the moduli space of parabolic bundles to obtain the generators. We also write out the explicit differential forms corresponding to the generators.\\
In section 7, we use the explicit differential forms of the generators to calculate the intersection formula.

\section{The moduli space of parabolic bundles} \label{s2}
We introduce the definition of the moduli space of parabolic bundles and its relation with the moduli space of flat connections.\\
An easy way to construct the moduli spaces is using the extended moduli space defined in \cite{groupcoho}.
We define the extended moduli space to be: $$X_\beta = (\epsilon_R \times e_\beta)^{-1}(G) = \{(g,X) \in G^{2g} \times \mathfrak{g} | g_1g_2 g_1^{-1}g_2^{-1} \cdots g_{2g}^{-1} = \beta exp(X)\}.$$ We consider the $G$ adjoint action on $X_\beta$ with moment map $\mu = pr_2$ (project onto $\mathfrak{g}$).\\
We can define the moduli space of parabolic bundles as a symplectic quotient \cite{verlin}:
\begin{align*}
    \mathcal{M}_{\beta,1}(\Lambda) &= X_\beta //_\Lambda G\\
    &= \mu^{-1}(\mathcal O_\Lambda) /G \\
    & \text{(where $\mathcal O_\Lambda$ is the orbit of $\Lambda$ under the adjoint action)}
\end{align*}
Notice that $\mu^{-1}(\mathcal O_\Lambda)$ over $\mu^{-1}(\Lambda)$ is a fibre bundle with fibre $G/T$. Further $G$ acts transitively on the $\mathfrak{g}$ component and $T$ fixes the $\mathfrak{g}$ component for points in $\mu^{-1}(\Lambda)$, in other words, the $T$ action is closed in $\mu^{-1}(\Lambda)$. Thus we have the following identification:
\begin{align*}
    \mathcal{M}_{\beta,1}(\Lambda) &=\mu^{-1}(\mathcal O_\Lambda)/G\\
    & = \mu^{-1}(\Lambda)/T\\
    & \cong \mu^{-1}(0)/T\\
    &\text{(for $\Lambda$ close to $0$)}
\end{align*}
Notice that in the last step we used the normal form theorem (see \cite{symtech} section 40) for $\Lambda$ close to $0$. This identification is not canonical  -- in the case of the extended moduli space we need to fix a connection to construct such an  identification. \\
We can also define the moduli space of flat connections using the symplectic quotient:
$$\mathcal{M}_{\beta} = X_\beta //_0 G = \mu^{-1}(0)/G$$
Now we have the following fibre bundle with fibre $G/T$:
\begin{equation} \label{eq1}
  \mathcal{M}_{\beta,1}(\Lambda) = \mu^{-1}(0)/ T \to \mu^{-1}(0)/ G = \mathcal{M}_{\beta}  
\end{equation}
Before finding the explicit generators of $H^*(\mathcal{M}_{\beta,1}(\Lambda))$ we will use equivariant cohomology to get an upper bound for the number of generators.\\
First we rewrite $\mathcal{M}_{\beta,1}(\Lambda)$ using the shifting trick (see \cite{sympub}):
\begin{align*}
    \mathcal{M}_{\beta,1}(\Lambda) &=X_\beta //_\Lambda G\\
    &= (X_\beta \times \mathcal \mathcal O_\Lambda) //_0 G\\
    &\text{(where the new moment map is $\nu(x, Y) = \mu(x) - Y$)}
\end{align*}
Thus, we can apply the Kirwan map, since $X_\beta \times \mathcal O_\Lambda$ is compact:
$$H^*_G(X_\beta \times \mathcal O_\Lambda) \to H^*((X_\beta \times \mathcal O_\Lambda) //_0 G) \text{ is surjective}$$
By expanding the left hand side and using equivariant formality twice (see \cite{equifor}):
\begin{align}
    H^*_G(X_\beta \times \mathcal O_\Lambda) &= (S\mathfrak{g}^*)^G \otimes H^*(X_\beta \times \mathcal O_\Lambda)\nonumber\\
    &= (S\mathfrak{g}^*)^G \otimes H^*(X_\beta) \otimes H^*(\mathcal O_\Lambda)\nonumber\\
    &= H^*_G(X_\beta) \otimes H^*(\mathcal O_\Lambda)
\end{align}
$H^*_G(X_\beta) \otimes H^*(\mathcal O_\Lambda)$ is an upper bound of $H^*(\mathcal{M}_{\beta,1}(\Lambda))$. The generators of $H^*_G(X_\beta)$ are provided in \cite{groupcoho} related to the generator of $H^*(\mathcal{M}_\beta)$ and the cohomology of $\mathcal O_\Lambda = G/T$ is well known. So it is intuitive to see that the generators of $H^*(\mathcal{M}_{\beta,1}(\Lambda))$ come from the generators of $H^*(\mathcal{M}_\beta)$ and generators of $H^*(G/T)$. In the following chapters we will develop a rigorous way to prove this and provide the explicit generators.\\

\section{Notation of Chern-Weil homomorphism}
Let $\pi : P \to M$ be a principal $G$-bundle, and let $\theta \in \Omega^1(P, \mathfrak{g})$ be the connection 1-form. Let $f : \mathfrak{g} \to \mathbb{R}$ be an  $Ad(G)$-invariant polynomial of degree $k$.\\
WLOG, let $\bar{f} : \mathfrak{g}^k \to \mathbb{R}$ be a multi-linear function and $f(X) = \bar{f}(X,X,\cdots, X)$.\\
For $\Omega = d\theta + \frac{1}{2}[\theta, \theta]$ the curvature, we can define:
$$f(\Omega) = \bar{f}(\Omega, \cdots, \Omega) \in \Omega^{2k}(P)$$
Explicitly we define:
$$f(\Omega)(v_1, \cdots, v_{2k}) = \sum_{\sigma \in S_{2k}} sgn(\sigma) \bar{f}(\Omega(v_{\sigma(1)},v_{ \sigma(1)}), \cdots , \Omega(v_{\sigma(2k-1)}, v_{\sigma(2k)}))$$

\begin{theorem}[Chern-Weil homomorphism] 
\label{CWmorphism}
For an invariant polynomial of degree $k$ and a  2-form $\Omega$ which is the 
curvature associated to  the connection $\theta$, we have: 
\[ \exists \Lambda \in \Omega^{2k}(M), \text{ such that } \pi^*(\Lambda) = f(\Omega)\]
Furthermore, the class $[\Lambda] \in H^{2k}(M)$ is independent of the connection $\theta$.
\end{theorem}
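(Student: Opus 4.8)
The plan is to prove the two assertions separately: first that $f(\Omega)$ descends to a well-defined form $\Lambda$ on $M$, and second that its cohomology class does not depend on the choice of $\theta$. For the first part, I would recall that a differential form on the total space $P$ is the pullback $\pi^*\Lambda$ of a form on the base precisely when it is \emph{basic}, i.e.\ both horizontal (it annihilates any tangent vector lying along the fibre) and $G$-invariant. So the strategy is to verify these two properties for $f(\Omega)$ and then define $\Lambda$ as the unique descended form.

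Horizontality of $f(\Omega)$ follows from horizontality of the curvature itself: by definition $\Omega = d\theta + \frac{1}{2}[\theta,\theta]$ satisfies $\Omega(v,w)=0$ whenever $v$ is vertical, so every term in the expansion of $f(\Omega)=\bar f(\Omega,\ldots,\Omega)$ vanishes on a vertical argument. For invariance I would use the equivariance $R_g^*\Omega = \mathrm{Ad}(g^{-1})\Omega$ of the curvature together with the hypothesis that $f$ is $\mathrm{Ad}(G)$-invariant; since $\bar f$ is the polarization of $f$, it too is $\mathrm{Ad}(G)$-invariant, whence $R_g^* f(\Omega) = f(\mathrm{Ad}(g^{-1})\Omega) = f(\Omega)$. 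This shows $f(\Omega)$ is basic, producing the desired $\Lambda$.

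To see that $[\Lambda]$ is a well-defined cohomology class I would first show $d\Lambda = 0$. Since $\pi^*$ is injective on forms, it suffices to prove $d\,f(\Omega)=0$ on $P$, which follows from the Bianchi identity $d_\theta\Omega = 0$ combined with the infinitesimal form of the invariance of $f$ (differentiating $f(\mathrm{Ad}(g^{-1})X)=f(X)$ yields $\sum_i \bar f(X,\ldots,[\xi,X],\ldots,X)=0$). Expanding $d\,f(\Omega) = k\,\bar f(d\Omega,\Omega,\ldots,\Omega)$ and substituting the Bianchi identity then gives zero.

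For independence of the connection, take two connections $\theta_0,\theta_1$ and interpolate linearly, $\theta_t = \theta_0 + t\alpha$ with $\alpha = \theta_1 - \theta_0$; note $\alpha$ is horizontal and $\mathrm{Ad}$-equivariant (tensorial), since the difference of two connections is. Writing $\Omega_t$ for the curvature of $\theta_t$, the heart of the argument is the transgression computation $\frac{d}{dt} f(\Omega_t) = d\bigl(k\,\bar f(\alpha,\Omega_t,\ldots,\Omega_t)\bigr)$, obtained from $\dot\Omega_t = d_{\theta_t}\alpha = d\alpha + [\theta_t,\alpha]$, the Bianchi identity, and the infinitesimal invariance above. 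Integrating in $t$ gives $f(\Omega_1)-f(\Omega_0) = d\,(TP)$, where $TP = \int_0^1 k\,\bar f(\alpha,\Omega_t,\ldots,\Omega_t)\,dt$ is again basic and so descends to $M$; hence $\Lambda_1 - \Lambda_0$ is exact and the two classes agree. The main obstacle is precisely this transgression identity: the careful bookkeeping of the derivative $\frac{d}{dt} f(\Omega_t)$ and the repeated use of Bianchi together with infinitesimal invariance to collapse the terms into a single exact form is the one genuinely computational step, whereas the descent and closedness arguments are comparatively formal.
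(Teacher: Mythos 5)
Your proposal is correct and complete: descent via basicness (horizontality of $\Omega$ plus $\mathrm{Ad}$-equivariance combined with $\mathrm{Ad}$-invariance of $f$), closedness via the Bianchi identity and infinitesimal invariance, and connection-independence via the linear interpolation $\theta_t$ and the transgression form — this is the standard and rigorous argument. Note that the paper itself states this theorem without any proof, as classical background in its notation section, so there is nothing to compare against; your write-up supplies exactly the standard proof that the paper implicitly defers to the literature.
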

\newcommand{\Inv}{{\rm Inv}}
The following map, where we define $\Inv(\mathfrak{g}) = S(\mathfrak{g}^*)^G$ to be the space of invariant polynomials, is called the Chern-Weil homomorphism.
\begin{align*}
    w : \Inv(\mathfrak{g}) &\to H^*(M)\\
    f & \mapsto [\Lambda]
\end{align*}
\begin{remark}
Since $H^*(BG) = H^*_G(pt) = \Inv(\mathfrak{g}) \subset S(\mathfrak{g}^*)$, the Chern-Weil homomorphism is actually a homomorphism $H^*(BG) \to H^*(M)$.
\end{remark}

\section{Leray–Hirsch theorem and fibre bundle $M/T \to M/G$}
Now, let's review the Leray–Hirsch theorem (see Hatcher \cite{AT} p.432):
\begin{theorem}[Leray–Hirsch theorem]
\label{LH}
Suppose we have a fibre bundle $$F \xhookrightarrow{i} E \xrightarrow{p} B.$$
Suppose $H^n(F)$ is finitely generated and there exist $c_1, \cdots ,c_k \in H^*(E)$ so that for any fibre inclusion $i$ we have:
$$i^*(c_1), \cdots ,i^*(c_k) \in H^*(F) \text{ are generators of } H^*(F).$$
Then the following map is an isomorphism:
\begin{align*}
    \Phi: H^*(B) \otimes H^*(F) &\to H^*(E)\\
    \sum_{ij} b_i \otimes i^*(c_j) &\mapsto \sum_{ij} p^*(b_i) \smile c_j
\end{align*}
Furthermore let $b_1, \cdots, b_l$ be generators of $H^*(B)$. Then we have that: $$p^*(b_1), \cdots, p^*(b_l), c_1 \cdots, c_k \text{ are generators of }H^*(E).$$
\end{theorem}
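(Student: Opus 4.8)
The plan is to prove the isomorphism via the Serre spectral sequence of the fibration, following Hatcher, and then deduce the generation statement as a purely formal consequence. Since the paper works with field coefficients (de Rham cohomology over $\mathbb{R}$), $H^*(F)$ is a finite-dimensional graded vector space, so I would first replace the spanning set $i^*(c_1), \ldots, i^*(c_k)$ by a subset that is a basis; this makes the expression $\sum_{ij} b_i \otimes i^*(c_j)$, and hence $\Phi$, well defined, while the full generating hypothesis is what is needed for the second claim. I then set up the spectral sequence with $E_2^{p,q} = H^p(B; \mathcal{H}^q)$, where $\mathcal{H}^q$ is the local system $q \mapsto H^q(F)$. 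The hypothesis that the global classes $c_j \in H^*(E)$ restrict on \emph{every} fibre to a generating set forces the monodromy action of $\pi_1(B)$ on $H^*(F)$ to be trivial: the monodromy must fix each $i^*(c_j)$ because these are restrictions of globally defined classes, and the $i^*(c_j)$ generate. Hence the coefficient system is trivial and $E_2^{p,q} = H^p(B) \otimes H^q(F)$.

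The second step is to show the spectral sequence degenerates at $E_2$. Each $c_j$ is a genuine cohomology class on the total space, hence a permanent cocycle, and its image in the bottom row $E_2^{0,*} = H^*(F)$ under the edge homomorphism is $i^*(c_j)$. Because the $i^*(c_j)$ span $E_2^{0,*}$, every element of the bottom row is a permanent cocycle, so all differentials vanish there. Using that the Serre spectral sequence is multiplicative and that $E_2^{*,*}$ is generated as an algebra by $E_2^{*,0} = H^*(B)$ (whose classes are permanent cocycles, being images of $p^*$) together with $E_2^{0,*}$, the Leibniz rule forces every higher differential to vanish. Thus $E_2 = E_\infty$.

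Degeneration gives $\mathrm{gr}\, H^*(E) \cong \bigoplus_p E_\infty^{p,*} = H^*(B) \otimes H^*(F)$. To upgrade this to the statement that the explicit map $\Phi$ is an isomorphism, I would observe that $\Phi$ respects the filtration of $H^*(E)$ by base degree, since $p^*(b_i)$ lies in filtration level $\deg b_i$ and $c_j$ in level $0$, and that the induced map on associated graded is exactly the identification coming from $E_\infty$. A five-lemma argument on the successive filtration quotients (or simply a dimension count in each total degree, everything being finite-dimensional) then shows $\Phi$ is an isomorphism. For the \emph{furthermore}, surjectivity of $\Phi$ means every class of $H^*(E)$ is a sum $\sum p^*(b_i) \smile c_j$; writing each $b_i$ as a polynomial in the ring generators $b_1, \ldots, b_l$ and using that $p^*$ is a ring homomorphism, every class is a polynomial in $p^*(b_1), \ldots, p^*(b_l), c_1, \ldots, c_k$, so these generate $H^*(E)$ as a ring.

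The main obstacle is the third step: passing from the abstract collapse $E_2 = E_\infty$ to the assertion that the \emph{specific} map $\Phi$, rather than some unidentified abstract isomorphism, is an isomorphism. This requires checking that the cup products $p^*(b_i) \smile c_j$ represent the correct classes in the associated graded, i.e. that multiplication in $H^*(E)$ is compatible with the product structure of $E_\infty$ and with the two edge homomorphisms; this compatibility is precisely the content of the multiplicativity of the spectral sequence and must be invoked carefully. As an alternative that sidesteps spectral sequences entirely, one can follow Hatcher's direct induction over a trivializing open cover of $B$: the isomorphism holds over a single trivializing set by the Künneth theorem, and one glues using the relative Mayer–Vietoris sequence together with the five lemma, with a direct-limit argument to handle a base that is not of finite type.
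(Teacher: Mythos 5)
The paper contains no proof of this theorem at all: it is quoted, with attribution, from Hatcher (\cite{AT}, p.~432) and used as a black box, so your argument can only be compared against the cited source. Your Serre spectral sequence proof is correct in outline and is a genuinely different route from Hatcher's, whose proof is the elementary double induction over a trivializing cover using the K\"unneth theorem, relative Mayer--Vietoris, the five lemma, and a direct limit --- exactly the alternative you mention in closing. The trade-off is the one you identify yourself: the spectral sequence makes the two key points transparent (restrictions of global classes are fixed by monodromy, so the coefficient system is trivial; they are permanent cocycles, so Leibniz kills all differentials and $E_2 = E_\infty$), but identifying the \emph{specific} map $\Phi$ with the abstract $E_\infty$-isomorphism requires the multiplicative structure of the Serre spectral sequence and the compatibility of both edge homomorphisms, machinery considerably heavier than anything Hatcher uses; moreover, as you set it up, your argument uses field coefficients (for the basis extraction and the dimension count), whereas the Mayer--Vietoris proof works over any coefficient ring for which $H^*(F)$ is finitely generated and free.

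One caution about your opening reduction. In the paper's actual application (Corollary \ref{cor4.1.2}) the classes $c_j$ restrict to the degree-two \emph{ring} generators $i^*(\alpha_j)$ of $H^*(G/T)$, which for rank $>1$ do not span $H^*(G/T)$ linearly; so ``generators'' in the hypothesis must be read as ring generators, and your move of extracting a subset of the $c_j$ whose restrictions form a basis is not available. The correct fix is to pass to monomials in the $c_j$: their restrictions are the corresponding monomials in the $i^*(c_j)$, hence span $H^*(F)$, and one extracts a basis from those. This costs you nothing downstream --- permanent cocycles in the zeroth column form a subring, so ring generation by the $i^*(c_j)$ still forces collapse at $E_2$, and $\Phi$ is then defined (and is an isomorphism) with respect to the chosen monomial basis --- and the ``furthermore'' clause, that $p^*(b_1),\ldots,p^*(b_l),c_1,\ldots,c_k$ generate $H^*(E)$ as a ring, follows exactly as you conclude.
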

\subsection*{Apply to $M/G \to M/T$}
Let $G$ be a \textbf{semi-simple} Lie group acting on $M$ \textbf{properly} and \textbf{freely} (so that $M/G$ and $M/T$ are smooth). We have the following fibre bundle:
$$G/T \xhookrightarrow{i} M/T \xrightarrow{p} M/G$$
Assume we already have found the generators $b_1, \cdots, b_l$ for $M/G$. We want to apply the Leray-Hirsch theorem to get generators of $M/T$, so we need to find those $c_1, \cdots, c_k \in H^*(E)$.\\
First, let us review some results about $H^*(G/T)$.  We are only interested in $\mathbb{R}$ or $\mathbb{Q}$ coefficients (see Theorem 8.3 on \cite{spec}).
We can consider the following fibre bundle: $$G/T \xhookrightarrow{i} BT \xrightarrow{Bp} BG$$
\begin{theorem}
\label{G/T}
Let $T$ be the maximal torus of $G$. Let $k = \mathbb{Q}$ or $k = \mathbb{R}$. We have the following exact sequence:
\[ k \to H^*(BG;k) \xrightarrow{Bp^*} H^*(BT;k)\xrightarrow{i^*} H^*(G/T;k) \to k\]
Explicitly, $i^*$ is onto and $ker(i^*)$ is generated as an ideal by $Bp^*(BG;k).$
\end{theorem}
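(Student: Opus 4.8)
The plan is to identify the three terms explicitly and then invoke the Borel description of $H^*(G/T)$ as a coinvariant algebra. Since only the homotopy types of $BG$, $BT$, and $G/T$ enter, I would first reduce to the case where $G$ is compact by replacing it with its maximal compact subgroup, which leaves all three spaces unchanged up to homotopy. With $W$ the Weyl group acting on $\mathfrak{t}$, the classical computation gives $H^*(BT;k) = S(\mathfrak{t}^*)$, a polynomial algebra on $\ell = \mathrm{rk}(G)$ generators in degree $2$, and $Bp^*$ identifies $H^*(BG;k)$ with the subalgebra of Weyl invariants $S(\mathfrak{t}^*)^W$. In particular $Bp^*$ is injective, which accounts for the left end of the stated sequence.

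For surjectivity of $i^*$, I would argue that $H^*(G/T;k)$ is generated as a ring by the first Chern classes $c_1(L_\lambda)$ of the homogeneous line bundles $L_\lambda$ attached to characters $\lambda \in \hat T$. Each such class is the image under $i^*$ of the corresponding degree-$2$ generator of $H^*(BT;k) = S(\mathfrak{t}^*)$, since $L_\lambda$ is the restriction to the fibre $G/T$ of the universal line bundle over $BT$. Hence $i^*$ hits a set of ring generators and is onto. Next, because the composite of a fibre inclusion with the bundle projection is null-homotopic, the map $G/T \xrightarrow{i} BT \xrightarrow{Bp} BG$ gives $i^* \circ Bp^* = 0$ in positive degrees, so $i^*$ annihilates the ideal $(Bp^*H^+(BG;k))$ and factors through the quotient $C := H^*(BT;k)/(Bp^*H^+(BG;k))$, inducing a surjection $\bar{\imath}: C \twoheadrightarrow H^*(G/T;k)$.

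It remains to show $\bar{\imath}$ is injective, which I would do by a dimension count. On the algebraic side, Chevalley's theorem says $S(\mathfrak{t}^*)$ is a free module over $S(\mathfrak{t}^*)^W$ of rank $|W|$, so the coinvariant algebra $C$ has total dimension $\dim_k C = |W|$. On the topological side, $G/T$ admits a cell decomposition by Schubert cells indexed by $W$, all of even real dimension, whence $\dim_k H^*(G/T;k) = |W|$ as well. A surjection between graded vector spaces of equal finite dimension is an isomorphism, so $\ker i^* = (Bp^*H^+(BG;k))$ exactly, as claimed; the right end of the sequence records the augmentation $H^0 = k$.

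The main obstacle is the passage from the containment $\ker i^* \supseteq (Bp^*H^+(BG;k))$ to equality. This is precisely where the invariant-theoretic input is essential: without Chevalley's freeness theorem one only knows that $i^*$ descends to the coinvariant quotient, and it is the identification $\dim_k C = \dim_k H^*(G/T;k) = |W|$ that closes the argument. An alternative to the dimension count is to run the Serre spectral sequence of $G/T \to BT \to BG$, where $E_2 = H^*(BG;k) \otimes H^*(G/T;k)$; the equality of Poincar\'e series $P(BG)\cdot P(G/T) = P(BT)$, again a consequence of Chevalley, forces degeneration at $E_2$, and reading off the two edge homomorphisms recovers both the surjectivity of $i^*$ and the description of its kernel. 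Either route funnels through the same invariant-theoretic fact, so that is where I expect the real work to sit.
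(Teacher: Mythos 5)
Be aware that the paper offers no proof of this theorem at all: it is quoted as a known classical result (Borel's theorem, cited as Theorem 8.3 of \cite{spec}), and the text that follows merely records its consequences, $H^*(BG)=\mathbb{R}[\alpha_1,\cdots,\alpha_l]^W$ and $H^*(G/T)=H^*(BT)/H^*(BG)$, with a pointer to Atiyah--Bott. So your write-up is not parallel to anything in the paper; it is a reconstruction of the standard proof of the result the paper cites, and it is essentially sound --- indeed your closing spectral-sequence sketch is the argument given in the paper's own reference. One caveat about your first route: the statement that $H^*(G/T;k)$ is generated by the Chern classes $c_1(L_\lambda)$ is not an independent input. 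Since $i\colon G/T\to BT$ classifies the $T$-bundle $G\to G/T$, each $c_1(L_\lambda)$ is by definition $i^*$ applied to the corresponding degree-$2$ class in $H^*(BT;k)$, so ``the $c_1(L_\lambda)$ generate'' is word-for-word the surjectivity of $i^*$ that you are trying to establish --- Borel's theorem again. Unless you supply a separate argument for that generation statement (say, via Schubert calculus and Chevalley's multiplication formula, showing that divisor classes generate), this route is circular, and the dimension count after it, though correct in itself, has no surjection to feed on. The clean, self-contained path is your second one: degenerate the Serre spectral sequence of $G/T\to BT\to BG$ --- either by the parity argument, since the Schubert cells make $H^*(G/T;k)$ purely even, or by the Poincar\'e series identity you state --- and then read off the two edge homomorphisms, with Chevalley's freeness theorem giving both the $E_2$ count and the identification of the kernel as the ideal generated by $Bp^*H^+(BG;k)$. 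Lead with that argument and keep the line-bundle discussion as motivation only.
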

More explicitly, assume $dim(T) = rank(G) = l$. We have the following calculation:
\begin{align*}
    H^*(BT) &= H^*(B(U(1)^l)) = H^*(BU(1)^l)\\
    & = \bigotimes_{i=1}^l H^*(BU(1)) = \bigotimes_{i=1}^l H^*(\mathbb{C}P^{\infty})\\
    & = \mathbb{R}[\alpha_1, \cdots, \alpha_l]
\end{align*}
Applying some results in Atiyah and Bott \cite{equivariant} section 2, we have:
\begin{align*}
    H^*(BG) &= H^*_G(pt) = H^*_T(pt)^W\\
    & = \mathbb{R}[\alpha_1, \cdots, \alpha_l]^W\\
    H^*(G/T) &= H^*(BT)/H^*(BG)\\
    &=  \mathbb{R}[\alpha_1, \cdots, \alpha_l]/ \mathbb{R}[\alpha_1, \cdots, \alpha_l]^W
\end{align*}
\begin{corollary} \label{cor4.1}
    $H^*(G/T)$ is generated by $i^*(\alpha_1),\cdots, i^*(\alpha_l)$ and the relation is provided by $H^*(BT)$ through the action of the  Weyl group.
\end{corollary}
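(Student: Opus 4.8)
The plan is to read the corollary directly off Theorem \ref{G/T} together with the ring-level identifications of $H^*(BT)$ and $H^*(BG)$ recorded just above it. There is essentially nothing new to compute here; the content is to repackage the exact sequence into a statement about generators and relations, so I would organize the argument into a generation step and a relation step.

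First I would establish the generation claim. By Theorem \ref{G/T} the restriction $i^* : H^*(BT) \to H^*(G/T)$ is onto. Since $H^*(BT) = \mathbb{R}[\alpha_1,\ldots,\alpha_l]$ is generated as an $\mathbb{R}$-algebra by $\alpha_1,\ldots,\alpha_l$, and a ring homomorphism carries a generating set of its domain onto a generating set of its image, the classes $i^*(\alpha_1),\ldots,i^*(\alpha_l)$ generate $\operatorname{im}(i^*) = H^*(G/T)$. This is the first half of the statement, and it is purely formal once surjectivity of $i^*$ is granted.

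Next I would pin down the relations. Theorem \ref{G/T} also asserts that $\ker(i^*)$ is the ideal of $H^*(BT)$ generated by the image of $Bp^*$. Invoking the Atiyah--Bott identification $H^*(BG) = H^*(BT)^W = \mathbb{R}[\alpha_1,\ldots,\alpha_l]^W$, under which $Bp^*$ is the inclusion of the $W$-invariant subring, this kernel is exactly the ideal $I_W \subset \mathbb{R}[\alpha_1,\ldots,\alpha_l]$ generated by the positive-degree Weyl-invariant polynomials. The first isomorphism theorem then gives
\[
H^*(G/T) \;\cong\; \mathbb{R}[\alpha_1,\ldots,\alpha_l]\,/\,I_W,
\]
which is precisely the assertion that the relations among the generators $i^*(\alpha_j)$ are those supplied by $H^*(BT)$ through the action of the Weyl group, namely that every invariant polynomial of positive degree is sent to zero.

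The step requiring the most care --- though it is borrowed rather than reproved here --- is the identification of $Bp^*$ with the inclusion $\mathbb{R}[\alpha_1,\ldots,\alpha_l]^W \hookrightarrow \mathbb{R}[\alpha_1,\ldots,\alpha_l]$, since this is what converts the abstract ``ideal generated by $\operatorname{im}(Bp^*)$'' of Theorem \ref{G/T} into the concrete ideal of positive-degree invariants. Granting this (it is exactly the computation of Atiyah and Bott \cite{equivariant} cited above), the corollary follows immediately from the two steps.
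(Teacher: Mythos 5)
Your proposal is correct and follows exactly the route the paper intends: the corollary is stated as an immediate consequence of Theorem \ref{G/T} (surjectivity of $i^*$ and the description of $\ker(i^*)$) together with the Atiyah--Bott identifications $H^*(BT)=\mathbb{R}[\alpha_1,\ldots,\alpha_l]$ and $H^*(BG)=\mathbb{R}[\alpha_1,\ldots,\alpha_l]^W$ displayed just above it. Your writeup is in fact slightly more careful than the paper's, since you make explicit that the quotient is by the ideal generated by the \emph{positive-degree} Weyl invariants, which is what the paper's shorthand $H^*(BT)/H^*(BG)$ means.
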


\begin{remark} \label{rk4}
$H^*(G/T)$ is generated only by degree $2$ classes.
\end{remark}
The above maps can be considered as pullbacks using the universal property (see section 16.5 in \cite{classify} or \cite{milnor}):\\
$$\begin{tikzcd}
G \arrow[r, "\Tilde{i}"] \arrow[d]
& EG \arrow[d, "\pi"] & H^*(G) & H^*(EG) \arrow[l, "\Tilde{i}^*"]\\
G/T \arrow[r, "i"] \arrow[d]
& BT \arrow[d, "Bp"] & H^*(G/T) \arrow[u] & H^*(BT) \arrow[l , "i^*"] \arrow[u, "\pi^*"]\\
pt \arrow[r, "i_0"] & BG
& H^*(pt) \arrow[u] & H^*(BT) \arrow[l] \arrow[u, "Bp^*"]
\end{tikzcd}$$
Corollary \ref{cor4.1} states that the map $i^*$ maps generators of $H^*(BT)$ to generators of $H^*(G/T)$ surjectively.
Again, using the universal property we can pull back $M \to M/T \to M/G$.  We have:
$$\begin{tikzcd}
G \arrow[r, "\Tilde{i}'"] \arrow[d]
& M \arrow[r, "\Tilde{i}''"] \arrow [d]
& EG \arrow[d, "\pi"]\\
G/T \arrow[r, red , "i'" red] \arrow[d] & M/T \arrow[r, "i''"] \arrow[d, red]
& BT \arrow[d, "Bp"] \\
pt \arrow[r, "i_0'"] & M/G \arrow[r, "i_0''"] & BG
\end{tikzcd}$$
Here, $\Tilde{i}'',i'',i_0 ''$ are pullback maps, and $i'$ is the fibre inclusion from $G/T \to M/T$. Observe that $i''\circ i'$ is also the same fibre inclusion as $i$ in previous diagram.\\
\begin{remark}
Two of the  arrows in the diagram   correspond to the fibre bundle $G/T \to M/T \to M/G$ that we are interested in. Our goal is to find cohomology classes in $H^*(M/T)$, so that the pullback by $i'$ are generators of $H^*(G/T)$.
\end{remark}
From the diagram and combining with the Leray-Hirsch theorem we can easily obtain the following results:
\begin{corollary} \label{cor4.1.2}
The fibre bundle $G/T \xhookrightarrow{i'} M/T \xrightarrow{p} M/G$ satisfies the condition of the Leray-Hirsch theorem.\\
Let $\alpha_1, \cdots, \alpha_l$ be the generators of $H^*(BT)$. We have that $i''^*(\alpha_1), \cdots, i''^*(\alpha_l) \in H^*(M/T)$ are the corresponding $c_i$ in the Leray-Hirsch theorem. 
\end{corollary}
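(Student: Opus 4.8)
The plan is to verify the two hypotheses of the Leray--Hirsch theorem (Theorem \ref{LH}) for the bundle $G/T \xhookrightarrow{i'} M/T \xrightarrow{p} M/G$ and, along the way, to exhibit the required classes $c_j$ explicitly. The first hypothesis, that $H^n(G/T)$ be finitely generated, is immediate: $G/T$ is a compact manifold, so each cohomology group is finite dimensional over $k$, and by Remark \ref{rk4} the whole ring is generated by the finitely many degree-two classes $i^*(\alpha_1), \dots, i^*(\alpha_l)$. It then remains to produce classes $c_1, \dots, c_l \in H^*(M/T)$ whose restriction along the fibre inclusion generates $H^*(G/T)$.

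First I would simply take $c_j := i''^*(\alpha_j)$, the pullback of the generator $\alpha_j \in H^*(BT)$ along the map $i'' : M/T \to BT$ of the second commutative diagram. The heart of the argument is then a short diagram chase. Restricting $c_j$ along $i'$ and using functoriality of pullback gives
\[
    i'^*(c_j) = i'^*\bigl(i''^*(\alpha_j)\bigr) = (i'' \circ i')^*(\alpha_j).
\]
By the observation recorded just after that diagram, the composite $i'' \circ i'$ coincides with the fibre inclusion $i : G/T \to BT$ of the earlier diagram. Hence $i'^*(c_j) = i^*(\alpha_j)$, and by Corollary \ref{cor4.1} these classes generate $H^*(G/T)$. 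This verifies the second hypothesis for the distinguished fibre and at the same time identifies the $c_j$ as claimed in the statement.

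The one point that requires care — and the step I expect to be the main obstacle — is that Theorem \ref{LH} asks for generation of $H^*(F)$ along \emph{every} fibre inclusion, whereas the diagram chase only treats the distinguished one. To close this gap I would invoke path-connectedness of the base $M/G$: since each $c_j$ is a single globally defined class on $M/T$, its restrictions to the fibres over two points joined by a path are matched by the induced isomorphism on fibre cohomology, so checking one fibre suffices. Connectedness of $M/G$ holds in the cases of interest (it is inherited from connectedness of $M$, i.e. of $\mu^{-1}(0)$ for the relevant moduli spaces), so this reduction is legitimate. With both hypotheses established, Theorem \ref{LH} applies directly and delivers $c_j = i''^*(\alpha_j)$ as the desired classes, completing the proof.
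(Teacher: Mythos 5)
Your proof is correct and follows essentially the same route as the paper: the identical diagram chase $i'^*(i''^*(\alpha_j)) = (i''\circ i')^*(\alpha_j) = i^*(\alpha_j)$, combined with Corollary \ref{cor4.1}, is exactly the paper's one-line argument. The additional verifications you supply --- finite generation of $H^*(G/T)$ and the reduction from all fibre inclusions to a single one via path-connectedness of the base --- are valid points of rigor that the paper leaves implicit.
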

\begin{proof}
$i'^*(i''^*(\alpha_i)) = (i'' \circ i')^*(\alpha_i) = i^*(\alpha_i)$, and by Corollary \ref{cor4.1} they are all generators of $H^*(G/T)$
\end{proof}
\section{Using Chern-Weil homomorphism to express the generators} \label{s5}
Our goal now is to express $i''^*(\alpha_i) \in H^*(M/T)$ and $i^*(\alpha_i) \in H^*(G/T)$ in terms of differential forms using the Chern-Weil homomorphism.   It is natural in the following sense:
\begin{remark} \label{rk5.1}
The pullback map $i''^*: H^*(BT) \to H^*(M/T) = H^*_T(M)$ since $G$ acts  freely and properly on $M$. In the Cartan model its elements are in the class of $\Omega^*(M) \otimes S(\mathfrak{t}^*)$. On the other hand, by definition $H^*(BT) = H^*_T(pt) = S(\mathfrak{t}^*)$. Notice that the invariant polynomials of $\mathfrak{t}$ are $$Inv(\mathfrak{t}) = S(\mathfrak{t}^*)).$$
So $i''^*$ is the following natural map (which is the pullback of the constant map $M \to p$):
\begin{align*}
    i''^*: S(\mathfrak{t}^*) = H^*(BT) &\to H^*(M/T) = H^*_T(M)\\
    f & \mapsto [1 \otimes f] \in H^*_T(M)
\end{align*}
\end{remark}

Now, we need to write out the generators of the symmetric polynomials $S(\mathfrak{t}^*)$. Since our $G$ is assumed to be semi-simple, we have a inner product structure on $\mathfrak{t}$ (the Killing form for $U(n)$ and $SU(n)$ case, it is just the trace). We denote it as:
$$\langle \cdot , \cdot \rangle : \mathfrak{g} \times \mathfrak{g} \to \mathbb{R}$$
Fix an orthonormal basis of $\mathfrak{t}$ in the form  $\{e_1, \cdots, e_l\}$.  We can write out the generators of 
$S(\mathfrak{t}^*)$ by $p_1, \cdots ,p_l \in \mathfrak{t}^* \subset S(\mathfrak{t}^*)$ as:
$$p_i := \langle e_i, \cdot \rangle$$
Now we have $H^*(BT) = S(\mathfrak{t}^*) = \mathbb{R}[f_1, \cdots, f_l]$. We can consider the Chern-Weil homomorphism of $p_i$.\\
First let us compute the generators of $H^*(G/T)$ explicitly by picking a connection using the Maurer-Cartan form $\theta \in \Omega^*(G, \mathfrak{g})$. We require that our connection for $G \to G/T$ is $\mathfrak{t}$ valued. We can use the inner product structure to perform a projection of $\mathfrak{g}$ to $\mathfrak{t}$. We have the following lemma:
\begin{lemma} The following 1-form is a connection 1-form for $G \xrightarrow{q} G/T$:
\begin{equation}
    \theta_T := \sum_{i = 1}^l \langle \theta, e_i\rangle e_i = \sum_{i = 1}^l \theta_i e_i \in \Omega^1(G, \mathfrak{t})
\end{equation}
(where we define $\theta_i = \langle \theta, e_i\rangle$)
\end{lemma}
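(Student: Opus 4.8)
The plan is to verify directly the two defining properties of a principal connection $1$-form for the $T$-bundle $q : G \to G/T$, where $T$ acts on $G$ by right multiplication. Writing $P : \mathfrak{g} \to \mathfrak{t}$ for the orthogonal projection determined by the inner product, so that $P(\eta) = \sum_{i=1}^l \langle \eta, e_i\rangle e_i$, the defining formula reads simply $\theta_T = P \circ \theta$; that is, $\theta_T$ is the $\mathfrak{t}$-component of the Maurer--Cartan form. I will therefore reduce each condition on $\theta_T$ to the corresponding (well-known) property of $\theta$ together with a linear-algebra statement about $P$.

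First I would record the two properties of the left Maurer--Cartan form $\theta_g(v) = (L_{g^{-1}})_* v$ that I need: (i) it reproduces fundamental vector fields, i.e. $\theta(\xi^*) = \xi$ for every $\xi \in \mathfrak{g}$, where $\xi^*_g = \frac{d}{ds}\big|_{s=0} g\exp(s\xi)$; and (ii) it is equivariant, $(R_t)^*\theta = Ad(t^{-1})\theta$. Both are standard and follow from a short computation using that left and right translations commute. For the reproducing property of $\theta_T$ I take $\xi \in \mathfrak{t}$ and compute $\theta_T(\xi^*) = P(\theta(\xi^*)) = P(\xi) = \xi$, the last equality because $P$ restricts to the identity on $\mathfrak{t}$; this is immediate.

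The substantive step — and the one I expect to be the only real obstacle — is the $T$-equivariance $(R_t)^*\theta_T = Ad(t^{-1})\theta_T$. Using $\theta_T = P \circ \theta$ and property (ii), this reduces to showing that $P$ commutes with $Ad(t)$ for every $t \in T$, i.e. $P \circ Ad(t^{-1}) = Ad(t^{-1}) \circ P$ on $\mathfrak{g}$. To establish this I would show that $Ad(t)$ preserves the orthogonal splitting $\mathfrak{g} = \mathfrak{t} \oplus \mathfrak{t}^{\perp}$. Since $T$ is abelian, $Ad(t)$ fixes $\mathfrak{t}$ pointwise, so in particular $Ad(t)(\mathfrak{t}) = \mathfrak{t}$. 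For the complement I use that the chosen inner product is $Ad$-invariant (it is the Killing form, or the trace form in the $U(n)$ and $SU(n)$ cases): for $X \in \mathfrak{t}^{\perp}$ and $\xi \in \mathfrak{t}$,
\[
\langle Ad(t)X, \xi\rangle = \langle Ad(t)X, Ad(t)\xi\rangle = \langle X, \xi\rangle = 0,
\]
so $Ad(t)X \in \mathfrak{t}^{\perp}$.

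Hence $Ad(t)$ respects the decomposition $\mathfrak{g} = \mathfrak{t} \oplus \mathfrak{t}^{\perp}$ and therefore commutes with the projection $P$ onto the first summand, which is exactly what was needed. Assembling the three pieces — $\theta_T = P\circ\theta$, the reproducing property, and the commutation $P \circ Ad(t^{-1}) = Ad(t^{-1}) \circ P$ combined with property (ii) — gives both defining properties of a connection $1$-form, completing the proof. The only point requiring genuine care is the invariance of the inner product together with the abelianness of $T$ in the displayed computation; everything else is formal.
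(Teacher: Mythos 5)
Your proof is correct and follows essentially the same route as the paper's: both verify the two defining properties of a connection form directly, using the reproducing and equivariance properties of the Maurer--Cartan form, the $Ad$-invariance of the inner product, and the fact that $Ad(t)$ fixes $\mathfrak{t}$ pointwise for $t \in T$. Your packaging of the equivariance step as ``$Ad(t)$ preserves the splitting $\mathfrak{g} = \mathfrak{t} \oplus \mathfrak{t}^{\perp}$, hence commutes with the projection $P$'' is just a coordinate-free restatement of the paper's basis computation $\langle Ad_{g^{-1}}\theta, e_i\rangle = \langle \theta, Ad_g e_i\rangle = \langle \theta, e_i\rangle$.
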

\begin{proof}
We need to show two things: i) $\theta_T$ is $T$-equivariant, ii) $\theta_T(\Tilde{X}) = X$ for $X \in \mathfrak{t}$ and $\Tilde{X}$ is the fundamental vector field generated by $X$.\\
To prove i) take an arbitrary $g \in T$. We have:
\begin{align*}
    R_g^*(\theta_T) &= \sum_{i = 1}^l \langle R_g^*\theta, e_i\rangle e_i\\
    & = \sum_{i = 1}^l \langle Ad_{g^{-1}} \circ \theta, e_i\rangle e_i = \sum_{i = 1}^l \langle \theta, Ad_g e_i\rangle e_i\\
    & = \sum_{i = 1}^l \langle \theta, e_i\rangle e_i \qquad \text{ (since $g \in T, e_i\in \mathfrak{t}$)}\\
    & = \theta_T = Ad_{g^{-1}} \theta_T \text{ (since $g \in T$ and $\theta_T$ is $\mathfrak{t}$ valued)}
\end{align*}
To prove ii) take arbitrary $X \in \mathfrak{t}$. We have:
\begin{align*}
    \theta_T(\Tilde{X}) &=\sum_{i = 1}^l \langle \theta(\Tilde{X}), e_i\rangle e_i\\
    & = \sum_{i = 1}^l \langle X, e_i\rangle e_i\\
    & = X \qquad \text{ (since $X\in \mathfrak{t}$)}
\end{align*}
So we have shown $\theta_T$ is a connection 1-form.
\end{proof}
Further we can compute the curvature of $\theta_T$:
\begin{align*}
    \Omega_T & = d \theta_T + \frac{1}{2}[\theta_T, \theta_T]\\
    & = d  \sum_{i = 1}^l \langle \theta, e_i\rangle e_i + 0 \text{ (since $\theta_T$ is $\mathfrak{t}$ valued)}\\
    & = \sum_{i = 1}^l \langle d\theta, e_i\rangle e_i = \sum_{i = 1}^l d\theta_i e_i
\end{align*}
Now, we can apply the Chern-Weil homomorphism to it:
\begin{align}
    p_j(\Omega_T) & = p_j(\sum_{i = 1}^l d\theta_i e_i)\nonumber \\
    & = \langle \sum_{i = 1}^l d\theta_i e_i, e_j \rangle \nonumber\\
    & = d\theta_j
\end{align}
By the Chern-Weil homomorphism we have (let $q: G \to G/T$ be the quotient map):
$$\exists \Lambda_i \in H^*(G/T), q^*(\Lambda_i) = d \theta_i$$
\begin{lemma} \label{l5.2}
    $[\lambda_i] \in H^2(G/T)$ are non-zero (in another words the $\Lambda_i$ are not exact) when $G$ is simply connected (e.g. $G = U(n)$).
\end{lemma}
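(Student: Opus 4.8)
The plan is to detect the class $[\Lambda_i]$ by integrating it against a spherical $2$-cycle in $G/T$, exploiting the transgression relation $q^*(\Lambda_i) = d\theta_i$ just established together with the homotopy long exact sequence of the fibration $T \hookrightarrow G \xrightarrow{q} G/T$. Since $G$ is simply connected we have $\pi_2(G) = 0$ and $\pi_1(G) = 0$, so the connecting homomorphism $\partial \colon \pi_2(G/T) \to \pi_1(T)$ is an isomorphism; moreover $\pi_1(G/T) = 0$, so by Hurewicz $H_2(G/T;\mathbb{Z}) \cong \pi_2(G/T) \cong \pi_1(T) \cong \mathbb{Z}^l$. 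Thus genuine $2$-cycles are available, and it suffices to produce one on which $\Lambda_i$ has nonzero period.

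First I would fix $[f] \in \pi_2(G/T)$ represented by $f \colon (S^2,\ast) \to (G/T, eT)$ and lift it to $\tilde{f}\colon D^2 \to G$ with $q \circ \tilde{f} = f \circ \rho$ (where $\rho \colon D^2 \to S^2$ collapses the boundary) and $\tilde{f}(\partial D^2) \subset T$; such a lift exists because $\pi_2(G) = 0$. By Stokes' theorem and $q^*(\Lambda_i) = d\theta_i$,
\begin{align*}
\int_{S^2} f^*\Lambda_i = \int_{D^2} \tilde{f}^* q^* \Lambda_i = \int_{D^2} d(\tilde{f}^* \theta_i) = \int_{\partial D^2} \tilde{f}^* \theta_i .
\end{align*}
The restriction of $\tilde{f}$ to $\partial D^2$ is a loop in $T$ whose homotopy class is exactly $\partial[f] \in \pi_1(T)$, which I identify with a vector $\xi$ in the integral lattice $\Lambda_T = \ker(\exp|_{\mathfrak{t}}) \subset \mathfrak{t}$. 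Evaluating the Maurer--Cartan form along the one-parameter subgroup $t \mapsto \exp(t\xi)$ gives $\theta_i = \langle \theta, e_i \rangle = \langle \xi, e_i\rangle\, dt$ on this loop, so the final integral equals $\langle \xi, e_i\rangle$. Hence the period of $\Lambda_i$ on $[f]$ is the linear pairing $\langle \partial[f], e_i\rangle$ (up to a fixed normalization).

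It then remains to choose $[f]$ making this pairing nonzero. Because $\partial$ is an isomorphism onto $\pi_1(T) \cong \Lambda_T$ and $\Lambda_T$ is a full-rank lattice spanning $\mathfrak{t}$, the functional $\langle \cdot, e_i\rangle$ (nonzero since $e_i \neq 0$) cannot vanish on all of $\Lambda_T$; picking $\xi \in \Lambda_T$ with $\langle \xi, e_i\rangle \neq 0$ and a corresponding $[f]$ yields $\int_{S^2} f^*\Lambda_i \neq 0$, so $\Lambda_i$ is not exact. The main obstacle is the bookkeeping in the middle step: verifying that the lift $\tilde{f}$ exists with boundary inside the fiber, and that the homotopy class of $\tilde{f}|_{\partial D^2}$ is genuinely $\partial[f]$, so that the de Rham period computed by Stokes really equals the topological pairing under $\pi_2(G/T) \xrightarrow{\sim} \pi_1(T) = \Lambda_T$; the remaining Maurer--Cartan computation is routine.

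As a cross-check I would note that this argument simultaneously identifies $[\Lambda_i]$ with the generator $i^*(\alpha_i)$ of Corollary~\ref{cor4.1}: the Chern--Weil homomorphism for $G \to G/T$ is the map classified by $i \colon G/T \to BT$, so $w(p_i) = i^*(\alpha_i)$. Since $\dim H^2(G/T;\mathbb{R}) = l$ by the Hurewicz computation and Remark~\ref{rk4} asserts that these $l$ classes generate $H^2(G/T)$, they must form a basis, so each is nonzero, recovering the lemma independently.
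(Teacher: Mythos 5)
Your proof is correct, and while it rests on the same core mechanism as the paper's --- the transgression identity $q^*(\Lambda_i)=d\theta_i$, Stokes' theorem, and simple connectivity of $G$ --- it runs along a genuinely different route: a direct period computation rather than an argument by contradiction. The paper assumes $\Lambda_i = d\lambda_i$, notes that $q^*(\lambda_i)-\theta_i$ is a closed $1$-form on $G$, integrates it over a coordinate circle $S^1\subset T$ to get $-1$ (the pullback $q^*\lambda_i$ is horizontal, while $\int_{S^1}\theta_i=1$), and derives a contradiction because that circle bounds a surface $N\subset G$. Your lifted disk $\tilde f\colon D^2\to G$ is exactly such a bounding surface read backwards: instead of a contradiction, you exhibit a spherical $2$-cycle in $G/T$ whose pairing with $[\Lambda_i]$ is $\langle\partial[f],e_i\rangle\neq 0$. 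Your packaging buys two things. First, more information: you identify the entire period homomorphism on $H_2(G/T;\mathbb{Z})\cong\pi_1(T)\cong\Lambda_T$ as $\xi\mapsto\langle\xi,e_i\rangle$, not merely the nonvanishing of one period. Second, it quietly repairs a soft spot in the paper's proof: for an arbitrary orthonormal basis $\{e_i\}$ of $\mathfrak{t}$, the one-parameter subgroup $t \mapsto \exp(te_i)$ need not close up into a circle at all (it can wind densely in a subtorus), so ``the $i$-th copy of the circle in $T$'' tangent to $e_i$ may not exist; your $\xi$ is instead chosen in the integral lattice $\Lambda_T=\ker(\exp|_{\mathfrak{t}})$ with $\langle\xi,e_i\rangle\neq 0$, which is always possible since the lattice has full rank. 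The costs are mild: you import the classical fact $\pi_2(G)=0$ and the homotopy exact sequence of $T\to G\to G/T$, and in the boundary-loop step you should say explicitly that $\theta_i|_T$ is closed (since $T$ is abelian, the Maurer--Cartan equation gives $d\theta|_T=0$), which is what lets you replace $\tilde f|_{\partial D^2}$ by the homotopic loop $t\mapsto\exp(t\xi)$ without changing the integral. Your concluding cross-check ($w(p_i)=i^*(\alpha_i)$ together with $\dim H^2(G/T)=l$) is also sound, and parallels how the paper itself deploys Corollary \ref{cor4.1} in Proposition \ref{generatorcw}.
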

\begin{proof}
    We prove this by contradiction. Assume there exist $\lambda_i \in H^1(G/T)$ such that $\Lambda_i = d \lambda_i$.\\
    Using the quotient map $q$ we can pull back both $\lambda_i$ and $\Lambda_i$ to $H^*(G)$. We have:
\begin{align*}
    d(q^*(\lambda_i) - \theta_i) &= q^*(d \lambda_i) - d\theta_i\\
    &= q^*(d \lambda_i) - q^*(\Lambda_i)\\
    &= q^*(d \lambda_i - \Lambda_i) = 0
\end{align*}
Now let the inclusion $S^1 \xhookrightarrow{} T \xhookrightarrow[]{} G$ be the inclusion of the $i$-th copy of the circle in $T$. For any $v \in TS^1 \subset TG$, with the inclusion we have $\theta(v) = ||v|| e_i$ so $\theta_i(v) = ||v||$.\\
Since $q^*(\lambda_i)$ is a  pullback form, it is horizontal (i.e. $q^*(\lambda_i)(v) = 0$ for $v \in TS^1$).
We have:
\begin{align*}
    \int_{S^1}  q^*(\lambda_i) - \theta_i &= -\int_{S^1} \theta_i = -1
\end{align*}
However, we can consider an arbitrary 2-submanifold $N$ in $G$, such that $\partial N = S^1$. We have:
\begin{align*}
    \int_{S^1}  (q^*(\lambda_i) - \theta_i) &= \int_{\partial N}  (q^*(\lambda_i) - \theta_i)\\
    & = \int_{N} d( q^*(\lambda_i) - \theta_i) = 0
\end{align*}
We have obtained a contradiction. Thus no $\Lambda_i$ is exact.
\end{proof}
\begin{prop}
\label{generatorcw} The classes
$[\Lambda_i]$ constructed above using the Chern-Weil homomorphism are generators of $H^*(G/T)$.
\end{prop}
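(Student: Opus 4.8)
The plan is to identify the Chern–Weil classes $[\Lambda_i]$ with the pullback classes $i^*(\alpha_i)$ that were already shown to generate $H^*(G/T)$ in Corollary \ref{cor4.1}, and then to invoke that corollary directly. The bridge between the two descriptions is the naturality of the Chern–Weil homomorphism, together with the observation (recorded in the commuting diagram preceding Corollary \ref{cor4.1.2}) that $i \colon G/T \to BT$ is precisely the classifying map of the principal $T$-bundle $q \colon G \to G/T$; equivalently, that $q$ is the pullback of the universal bundle $ET \to BT$ along $i$.

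First I would recall the universal statement: for the universal $T$-bundle over $BT$, the Chern–Weil homomorphism $\Inv(\mathfrak{t}) = S(\mathfrak{t}^*) \to H^*(BT)$ is the canonical isomorphism under which the linear coordinate $p_i$ corresponds to the degree-two generator $\alpha_i$. For $BT = (\mathbb{C}P^{\infty})^l$ this is the standard fact that the first Chern class of the $i$-th tautological line bundle generates the $i$-th factor. Next, since the Chern–Weil homomorphism is natural with respect to bundle pullback — a connection pulls back to a connection and the curvature pulls back to the curvature, so the forms $f(\Omega)$ downstairs pull back compatibly — applying this to $q = i^*(ET \to BT)$ gives
\[
w_{G/T}(p_i) = i^*\bigl(w_{BT}(p_i)\bigr) = i^*(\alpha_i).
\]
The class on the left is by construction $[\Lambda_i]$: it is the unique class whose pullback by $q$ equals $p_i(\Omega_T) = d\theta_i$, and connection-independence (Theorem \ref{CWmorphism}) guarantees that the particular connection $\theta_T$ we chose does not affect it. Hence $[\Lambda_i] = i^*(\alpha_i)$, possibly up to the nonzero normalization scalar relating $p_i$ and $\alpha_i$, which is irrelevant for the generating property.

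With the identification $[\Lambda_i] = i^*(\alpha_i)$ in hand, Corollary \ref{cor4.1} immediately yields that the $[\Lambda_i]$ generate $H^*(G/T)$, and Remark \ref{rk4} confirms that these degree-two classes already suffice to generate the whole ring. I expect the main obstacle to be the careful justification of the displayed equality — namely that the hands-on Chern–Weil construction using the explicit connection $\theta_T$ really computes the pullback $i^*$ of the universal generator, rather than some other class. This is exactly where one must assemble three ingredients: the naturality of Chern–Weil, the identification of $i$ as the classifying map of $q$, and the connection-independence of Theorem \ref{CWmorphism}; once these are in place the conclusion is formal. Lemma \ref{l5.2} then serves as an \emph{a posteriori} consistency check, confirming the nonvanishing of each $[\Lambda_i]$ in the simply connected case.
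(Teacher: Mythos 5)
Your proposal is correct, but it takes a genuinely different route from the paper's proof. The paper argues indirectly: it notes that the $[\Lambda_i]$ are degree-2 classes, and via Corollary \ref{cor4.1} and Remark \ref{rk4} reduces the claim to two facts --- that each $[\Lambda_i]$ is nonzero (supplied by the Stokes-theorem contradiction of Lemma \ref{l5.2}, which requires $G$ simply connected) and that the Weyl-group action on the $[\Lambda_i]$ matches its action on the $\alpha_i \in H^*(BT)$ (asserted from equivariance of the construction). You instead prove the sharper identity $[\Lambda_i] = i^*(\alpha_i)$ (up to a nonzero normalization) by combining naturality of the Chern--Weil homomorphism under pullback with the fact, visible in the paper's own diagrams, that $i \colon G/T \to BT$ classifies the $T$-bundle $q \colon G \to G/T$, and then you quote Corollary \ref{cor4.1}. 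Your route buys more: the identity gives linear independence of the $[\Lambda_i]$ and exact compatibility with the relations, not merely nonvanishing of each class separately (which alone would not force generation of the $l$-dimensional space $H^2(G/T)$ --- this is precisely the load the paper's Weyl-equivariance remark has to carry); and it needs neither the simple-connectedness hypothesis of Lemma \ref{l5.2} nor that remark. What the paper's route buys is that it stays entirely within finite-dimensional differential geometry: your argument must make sense of the universal Chern--Weil isomorphism $S(\mathfrak{t}^*) \cong H^*(BT;\mathbb{R})$ on the infinite-dimensional space $BT$, which is standard (finite-dimensional approximations of $BT$ such as $(\mathbb{C}P^N)^l$, or the identification $H^*(BT) = H^*_T(pt) = S(\mathfrak{t}^*)$ already invoked in Remark \ref{rk5.1}) but should be cited explicitly rather than folded into the smooth Chern--Weil theorem of Section 3, which is stated only for bundles over manifolds.
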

\begin{proof}
    First we observe all $\Lambda_i$ are 2-forms.  Using the corollary \ref{cor4.1} and Remark \ref{rk4}, we only need to show that the $[\Lambda_i]$ correspond to non-zero cohomology classes and the Weyl group action on them is the same as its action on $\alpha_1, \cdots, \alpha_l \in H^*(BT)$.\\
    From Lemma \ref{l5.2} we have shown the $[\Lambda_i]$ are non-zero, and the action of $G$ on the classes $\Lambda_i$ is exactly the action on $p_i \in S(\mathfrak{t}^*) = H^*(BT)$.
\end{proof}
Now, we can introduce our main result:
\begin{prop}
\label{final}
Pick a connection $\theta_M \in \Omega^1(M,\mathfrak{t})$ and its curvature $\Omega_M \in \Omega^2(M, \mathfrak{t})$ for the principal bundle $M \to M/T$. The Chern-Weil classes $[\Tilde{\Lambda}_i]$ correspond to invariant polynomials $p_i$. More explicitly, $p_i(\Omega_M)$ are the required classes $c_i$ in the Leray-Hirsch theorem.
\end{prop}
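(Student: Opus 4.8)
The plan is to reduce the statement to Corollary \ref{cor4.1.2}, which already identifies the classes $i''^*(\alpha_i) \in H^*(M/T)$ as the required $c_i$, and then to show that the Chern--Weil forms $p_i(\Omega_M)$ represent exactly these classes. Since $G$ (hence $T$) acts freely and properly on $M$, we have $H^*(M/T) = H^*_T(M)$, and I would carry out the entire comparison inside the Cartan model $(\Omega^*(M) \otimes S(\mathfrak{t}^*))^T$. Under the identification $\alpha_i \leftrightarrow p_i$ of Section \ref{s5}, Remark \ref{rk5.1} tells us that $i''^*(\alpha_i)$ is represented by $1 \otimes p_i$. The whole proposition is thus equivalent to the single equality $[p_i(\Omega_M)] = [1 \otimes p_i]$ in $H^*_T(M)$.

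First I would check that $p_i(\Omega_M)$ is a legitimate, $d_T$-closed element of the Cartan complex. Exactly as in the computation preceding Proposition \ref{generatorcw}, writing $\theta_M = \sum_a \theta_a e_a$ with $\theta_a = \langle \theta_M, e_a\rangle$ and using that $\mathfrak{t}$ is abelian gives $\Omega_M = \sum_a d\theta_a\, e_a$ and $p_i(\Omega_M) = d\theta_i \in \Omega^2(M)$. Because the curvature of a principal connection is horizontal, $\iota_{\Tilde{e}_a}\Omega_M = 0$ for every fundamental vector field $\Tilde{e}_a$, so $d_T(p_i(\Omega_M)) = 0$; being basic, $p_i(\Omega_M)$ descends to $M/T$ and represents the Chern--Weil class $[\Tilde{\Lambda}_i]$ of Theorem \ref{CWmorphism}.

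The heart of the argument is then an explicit chain homotopy. I would feed the $T$-invariant $1$-form $\theta_i = \langle \theta_M, e_i\rangle$ into the Cartan differential and compute
\[ d_T\theta_i = d\theta_i - \sum_a x^a\, \iota_{\Tilde{e}_a}\theta_i = p_i(\Omega_M) - 1\otimes p_i, \]
where the first term is $p_i(\Omega_M)$ as above, and for the second the connection property $\theta_M(\Tilde{e}_a) = e_a$ together with orthonormality gives $\iota_{\Tilde{e}_a}\theta_i = \langle e_a, e_i\rangle = \delta_{ai}$, so that $\sum_a x^a\,\iota_{\Tilde{e}_a}\theta_i = x^i = 1\otimes p_i$. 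This exhibits $p_i(\Omega_M) - 1\otimes p_i$ as $d_T$-exact, which is precisely the needed identity. Combining it with Remark \ref{rk5.1} and Corollary \ref{cor4.1.2} yields $[\Tilde{\Lambda}_i] = i''^*(\alpha_i) = c_i$.

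I expect the main obstacle to be conventional rather than conceptual: one must fix the sign convention for $d_T$ and the identification $S(\mathfrak{t}^*) = \mathbb{R}[x^1,\dots,x^l]$ with $x^i \leftrightarrow p_i$ so that they agree with Remark \ref{rk5.1}, and one must invoke horizontality of $\Omega_M$ to license the descent to $M/T$. Once these are pinned down, the essential content is the one-line homotopy above, which works exactly because $\mathfrak{t}$ is abelian (forcing $\Omega_M = d\theta_M$) and because the $p_i$ are linear, so that $\theta_i$ furnishes a primitive directly.
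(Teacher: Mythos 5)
Your proof is correct, but it follows a genuinely different route from the paper's. The paper argues fibrewise: it observes that the restriction $\Tilde{i}^*(\theta_M)$ of the connection to a fibre $G \subset M$ is itself a connection 1-form for $G \to G/T$, so by naturality of the Chern--Weil construction $\Tilde{i}^*(p_j(\Omega_M)) = p_j(\Tilde{i}^*\Omega_M)$, and then by independence of the Chern--Weil class from the choice of connection, $i^*[\Tilde{\Lambda}_j] = [\Lambda_j]$ --- the classes already shown to generate $H^*(G/T)$ in Proposition \ref{generatorcw}. You instead work globally in the Cartan model: the chain homotopy $d_T\theta_i = p_i(\Omega_M) - 1\otimes p_i$ identifies the Chern--Weil class $[\Tilde{\Lambda}_i]$ with $i''^*(\alpha_i)$ in all of $H^*(M/T)$, after which Corollary \ref{cor4.1.2} finishes. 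Your route proves something stronger --- equality of classes on $M/T$, not merely of their restrictions to fibres --- which makes Remark \ref{rk5.1} precise: the Chern--Weil homomorphism realizes the classifying-map pullback $i''^*$. It also bypasses Proposition \ref{generatorcw} and the non-vanishing argument of Lemma \ref{l5.2}, replacing them by the purely topological Corollary \ref{cor4.1.2}. The price is heavier machinery: you need the equivariant de Rham theorem for free actions (that pullback by $q$, viewed as inclusion of basic forms into the Cartan complex, induces $H^*(M/T) \cong H^*_T(M)$ and matches the descended Chern--Weil form with its basic representative), whereas the paper's argument uses nothing beyond naturality and connection-independence of Chern--Weil. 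The sign ambiguity in the convention for $d_T$ that you flag is harmless: at worst it changes $c_i$ to $-c_i$, which serves equally well in the Leray--Hirsch theorem.
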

\begin{proof}
    All we need to prove is for the map $i$ in the following commutative diagram (it is an arbitrary fibre map), that $i^*([\Tilde{\Lambda_i]}) = [\Lambda_i]$.
    $$\begin{tikzcd}
G \arrow[r, "\Tilde{i}"] \arrow[d]
& M \arrow[d, "q"]\\
G/T \arrow[r, "i"] \arrow[d]
& M/T \arrow[d, "p"]\\
pt \arrow[r, "i_0"] & M/G
\end{tikzcd}$$
The main observation to make is that $\Tilde{i}^*(\theta_M) \in \Omega^1(G, \mathfrak{t})$ is also a connection 1-form.
\begin{align*}
    \Tilde{i}^*(p_j(\Omega_M)) &= p_j(\Tilde{i}^*(\Omega_M))
\end{align*}
Thus $i^*([\Tilde{\Lambda}_j])$ are the Chern-Weil classes of $p_j$ using the connection $\Tilde{i}^*(\theta_M)$, and the Chern-Weil homomorphism is independent of the connection. Finally we obtain:
$$i^*([\Tilde{\Lambda}_j]) = \Lambda_j$$
We have proven that $[\Tilde{\Lambda}_j]$ satisfies the conditions in the Leray-Hirsch theorem.
\end{proof}

\section{Application to moduli space of parabolic bundles}
Finally, we apply our result to the moduli space of parabolic bundles. We need to fit the moduli space of parabolic bundles and moduli space of flat connections into the above setting using equation (\ref{eq1}).\\
We denote $Y_{g} := \{A_1, \cdots, B_g | [A_1, B_1]\cdots [A_g,B_g] = g\}$. Thus we have that the moduli space of parabolic bundles is $Y_{\beta}/T$, and we know the generators in $Y_{\beta}/G$. We can apply Proposition \ref{final} to $G/T \xhookrightarrow{i} Y_\beta/T \xrightarrow{p} Y_\beta /G$.\\
\begin{corollary} \label{cor6.0.1}
    Let $\theta_Y \in \Omega^*(Y_\beta, \mathfrak{t})$ be the connection of $Y_\beta \to Y_\beta /T$.
    Further assume $[\Lambda_i] \in H^2(Y_\beta/T)$ are the Chern-Weil classes corresponding to $p_i$ where $p_i$ generates $Inv(\mathfrak{t})$. Then the generators of $H^*(Y_\beta/T)$ are:
    $$p^*(a_r),p^*(b_r^j),p^*(f_r),\Lambda_i$$
where $a_r,b_r^j,f_r$ are generators of $H^*(Y_\beta/G)$ in \cite{groupcoho}.
\end{corollary}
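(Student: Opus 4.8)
The plan is to deduce this statement directly from the abstract framework of Sections~\ref{s2}--\ref{s5}, taking $M = Y_\beta$. First I would record the identification $Y_\beta = \mu^{-1}(0)$: from $X_\beta = \{(g,X) \mid g_1 g_2 g_1^{-1} g_2^{-1}\cdots g_{2g}^{-1} = \beta\exp(X)\}$ with moment map $\mu = pr_2$, we get $\mu^{-1}(0) = \{g \mid \prod_r [A_r,B_r] = \beta\} = Y_\beta$. Hence, by equation~(\ref{eq1}), $Y_\beta/G = \mathcal{M}_\beta$ carries the known generators and $Y_\beta/T = \mathcal{M}_{\beta,1}(\Lambda)$ is the space whose cohomology we want. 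This places us in a fibre bundle $G/T \xhookrightarrow{i} Y_\beta/T \xrightarrow{p} Y_\beta/G$ of exactly the form treated in Section~\ref{s5}.

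Next I would verify the standing hypotheses of that section for $M = Y_\beta$: that $G$ is semisimple and acts properly and freely, so that $Y_\beta/T$ and $Y_\beta/G$ are smooth and $Y_\beta \to Y_\beta/T$ is a principal $T$-bundle admitting the connection $\theta_Y$. Freeness is the delicate point, since it fails at reducible representations; it holds precisely when $\beta$ is chosen generically (so that every point of $Y_\beta$ is irreducible with trivial stabilizer), which is exactly the genericity assumption under which $\mathcal{M}_\beta$ is smooth and the classes $a_r, b_r^j, f_r$ are constructed in \cite{groupcoho}. I would state this assumption explicitly at the outset.

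The remainder is an assembly of results already proved. By Corollary~\ref{cor4.1.2} the bundle $G/T \xhookrightarrow{i} Y_\beta/T \xrightarrow{p} Y_\beta/G$ satisfies the hypotheses of the Leray--Hirsch theorem~\ref{LH}, and by Proposition~\ref{final} the required fibre classes $c_i$ may be taken to be the Chern--Weil classes $\Lambda_i = p_i(\Omega_Y)$, where $\Omega_Y = d\theta_Y + \tfrac12[\theta_Y,\theta_Y]$ is the curvature of $\theta_Y$ and the $p_i$ generate $\Inv(\mathfrak{t})$; indeed Proposition~\ref{final} shows that their restrictions along the fibre inclusion $i$ generate $H^*(G/T)$. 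I would then apply the ``furthermore'' clause of Theorem~\ref{LH}: once $b_1,\dots,b_l$ generate $H^*(Y_\beta/G)$, the classes $p^*(b_1),\dots,p^*(b_l),c_1,\dots,c_k$ generate $H^*(Y_\beta/T)$. Substituting the generators $a_r, b_r^j, f_r$ of $H^*(Y_\beta/G)=H^*(\mathcal{M}_\beta)$ from \cite{groupcoho} for the $b_i$, and the $\Lambda_i$ for the $c_i$, produces exactly the asserted list $p^*(a_r), p^*(b_r^j), p^*(f_r), \Lambda_i$.

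Essentially all of the cohomological work has already been carried out in Proposition~\ref{final} and Theorem~\ref{LH}, so I expect the main obstacle to lie not in the algebra but in justifying that the concrete space $Y_\beta$ meets the smoothness and freeness requirements of the abstract setup, i.e. in pinning down the genericity of $\beta$. A secondary point that must be checked against \cite{groupcoho} is that $a_r, b_r^j, f_r$ really do form a generating set for $H^*(\mathcal{M}_\beta)$ itself --- not merely the images under the Kirwan map of equivariant classes on $Y_\beta$ --- since the Leray--Hirsch conclusion requires honest generators of the base $Y_\beta/G$.
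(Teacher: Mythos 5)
Your proposal is correct and follows essentially the same route as the paper: the paper's proof is exactly this assembly, applying Proposition \ref{final} and Corollary \ref{cor4.1.2} together with the ``furthermore'' clause of Theorem \ref{LH} to the bundle $G/T \hookrightarrow Y_\beta/T \to Y_\beta/G$, with the generators of $H^*(Y_\beta/G)$ imported from \cite{groupcoho}. Your extra attention to the freeness of the action (genericity of $\beta$) and to the $a_r, b_r^j, f_r$ being honest generators of the base makes explicit some hypotheses the paper leaves tacit, but does not alter the argument.
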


\subsection*{Explicit connection for parabolic bundles}
Finally, in order to find explicit generators in terms of differential forms, we will write out the connection of $Y_{\beta exp(\Lambda)} \to Y_{\beta exp(\Lambda)}/T$ using Maurer-Cartan forms.\\
For regular $\Lambda \in \mathfrak{g}$ (for simplicity one may assume that $\Lambda$ is diagonal with distinct nonzero eigenvalues), let $T = Stab_\Lambda$ be a maximal torus of $G$. We have the following identification from section \ref{s2}:
$$\mathcal{M}_{\beta,1}(\Lambda) = \{A_1, \cdots, B_g | [A_1, B_1]\cdots [A_g,B_g] \in \beta exp(\Lambda)\}/T$$
We want to calculate the vertical tangent vectors of the principal bundle (i.e. tangent vectors of fibres) at point $(A_1, \cdots, B_g)$. Since it is a principal bundle, all we need are the tangent vectors generated by the Lie algebra using  the Lie group action. By explicit calculation we obtain:
$$\frac{d}{dt}|_{t=0}exp(tX)A_i exp(-tX) = XA_i - A_iX.$$
Similarly for $B_i$, combining all terms we get:
\begin{prop}
    The vertical direction of the principal bundle $Y_{\beta exp(\lambda)} \to Y_{\beta exp(\lambda)}/T$ are of the form (for $X \in T$):
    $$(XA_1-A_1X, XB_1-B_1X, \cdots, X B_g -B_g X)$$
\end{prop}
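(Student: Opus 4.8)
The plan is to identify the vertical tangent space of the principal $T$-bundle $Y_{\beta\exp(\lambda)} \to Y_{\beta\exp(\lambda)}/T$ with the image of the Lie algebra $\mathfrak{t}$ under the infinitesimal action map, and then to evaluate that image coordinate by coordinate. Since the bundle is principal, each fibre is a single $T$-orbit, so the vertical tangent space at a point $y = (A_1, B_1, \dots, A_g, B_g)$ is exactly the tangent space to the orbit $T\cdot y$. This space is spanned by the fundamental vector fields $\widetilde{X}_y$ for $X \in \mathfrak{t}$, and because the $T$-action is free the assignment $X \mapsto \widetilde{X}_y$ is a linear isomorphism of $\mathfrak{t}$ onto the vertical space. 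Hence it suffices to write down $\widetilde{X}_y$ for an arbitrary $X \in \mathfrak{t}$.

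First I would recall that the $T$-action is the restriction to $T$ of the simultaneous conjugation (adjoint) action on $G^{2g}$, so the one-parameter subgroup $\exp(tX)$ sends the factor $A_i$ to $\exp(tX)A_i\exp(-tX)$ and the factor $B_i$ to $\exp(tX)B_i\exp(-tX)$. Differentiating at $t = 0$ and using the product rule in the matrix group $G$ yields the $A_i$-component $XA_i - A_iX$, exactly as recorded in the display preceding the statement, and the identical computation gives the $B_i$-component $XB_i - B_iX$. Assembling these contributions across all $2g$ factors produces the tuple $(XA_1 - A_1X, XB_1 - B_1X, \dots, XB_g - B_gX)$ asserted in the proposition.

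The one point demanding care, and the step I expect to be the main obstacle, is to check that these vectors are genuinely tangent to the constraint variety $Y_{\beta\exp(\lambda)}$ rather than merely to the ambient $G^{2g}$. I would settle this by noting that conjugation by $t \in T$ sends each commutator $[A_i,B_i]$ to $t[A_i,B_i]t^{-1}$, so the full product of commutators is carried to $t\,\beta\exp(\lambda)\,t^{-1}$; since $\beta$ is central and $\exp(\lambda) \in T$ is fixed by the abelian torus $T$, this equals $\beta\exp(\lambda)$. Thus the $T$-action preserves $Y_{\beta\exp(\lambda)}$ (consistent with the identification $\mathcal{M}_{\beta,1}(\Lambda) = Y_{\beta\exp(\lambda)}/T$ from Section \ref{s2}), the fundamental vector fields are automatically tangent to the fibre, and the computed tuple lies in the vertical subspace of $T_y Y_{\beta\exp(\lambda)}$. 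No further estimates are required; what remains is only the routine derivative already displayed.
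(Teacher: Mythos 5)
Your proposal is correct and follows essentially the same route as the paper: identify the vertical space with the span of the fundamental vector fields of the $T$-action (conjugation on each factor) and differentiate $\exp(tX)A_i\exp(-tX)$ at $t=0$. Your extra verification that conjugation by $T$ preserves the constraint variety $Y_{\beta\exp(\lambda)}$ is a worthwhile point the paper leaves implicit (it is built into the earlier identification $\mathcal{M}_{\beta,1}(\Lambda) = Y_{\beta\exp(\Lambda)}/T$), but it does not change the argument.
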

Using the left multiplication map, we can identify the above vector as an element of $\mathfrak{g}^{2g}$:
$(A_1^{-1} XA_1- X, \cdots, B_g^{-1}X B_g - X)$.
Picking an orthonormal basis $e_1, \cdots,e_{n} \in Lie(T)$ for $Lie(T)$,  we will have the following set of vectors:
$$(A_1^{-1} e_i A_1- e_i, \cdots, B_g^{-1} e_i B_g - e_i).$$
In order to see that this set of vectors is linearly independent we need to study the map $X \to Ad_A X - X$ using a root space decomposition.
\begin{prop}
    Let $A \in G$ and $A \not\in T$. Consider the function $f(X) = Ad_A X - X$. Then $\{f(e_i)|i=1,\cdots, n\}$ is a linearly independent set of vectors (where $e_i$ are basis for $\mathfrak{t} = Lie(T)$).\\
\end{prop}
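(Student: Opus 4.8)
The plan is to recognize that linear independence of $\{f(e_1),\dots,f(e_n)\}$ is exactly injectivity of the linear map $f\big|_{\mathfrak{t}}$, and then to identify its kernel. Since $f(X)=Ad_A X-X$ is linear in $X$, we have $\sum_i c_i f(e_i)=f(\sum_i c_i e_i)$, so the $f(e_i)$ are independent if and only if $f(X)=0$ forces $X=0$ for $X\in\mathfrak{t}$. Now $f(X)=0$ means $Ad_A X=X$, i.e. $X$ lies in the fixed subspace $\mathfrak{g}^A:=\ker(Ad_A-\mathrm{Id})=Lie(Z_G(A))$. Hence the whole proposition reduces to the single assertion
\[ \mathfrak{t}\cap\mathfrak{g}^A=0, \]
that no nonzero element of the chosen Cartan subalgebra commutes with $A$. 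I would carry out the proof in this equivalent reformulation, since it is precisely the infinitesimal freeness needed for the $T$-action, and then read off the independence of the $f(e_i)$ at the end.

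Next I would bring in the root space decomposition, as the preceding remark suggests, to control $\mathfrak{g}^A$ explicitly. Because $G$ is compact and connected, $A$ is conjugate into $T$, say $A=g\exp(H)g^{-1}$ with $H\in\mathfrak{t}$; then $\mathfrak{g}^A=Ad_g\,\mathfrak{g}^{\exp H}$, and on the complexification the root decomposition gives $\mathfrak{g}^{\exp H}_{\mathbb{C}}=\mathfrak{t}_{\mathbb{C}}\oplus\bigoplus_{\alpha(H)\in 2\pi i\mathbb{Z}}\mathfrak{g}_\alpha$. For a regular $A$ — the generic and geometrically relevant case — every root is nonvanishing on $H$, so $\mathfrak{g}^A=Ad_g\,\mathfrak{t}=:\mathfrak{t}_A$ is the Lie algebra of the unique maximal torus $T_A=Z_G(A)^\circ$ containing $A$. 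The crux $\mathfrak{t}\cap\mathfrak{g}^A=0$ therefore becomes the transversality statement $\mathfrak{t}\cap\mathfrak{t}_A=0$ for two maximal tori, one of which contains $A\notin T$.

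The main obstacle is exactly this last vanishing, and it is the place where the hypothesis $A\notin T$ must genuinely be used. In the rank-one setting — which covers the motivating $SU(2)$ and $PSL(2,\mathbb{R})$ examples — the argument closes immediately: $\mathfrak{t}$ is a line, so every nonzero $X\in\mathfrak{t}$ is regular and $Z_G(X)=T$ by self-centrality of the maximal torus; if some nonzero $X\in\mathfrak{t}$ satisfied $Ad_A X=X$ then $A\in Z_G(X)=T$, contradicting $A\notin T$, whence $\mathfrak{t}\cap\mathfrak{g}^A=0$ and the $f(e_i)$ are independent. The same self-centralizing principle is the engine in higher rank, but there two distinct maximal tori can share a common subtorus, so I expect that verifying $\mathfrak{t}\cap\mathfrak{t}_A=0$ — rather than the purely formal reduction of the first two steps — will be the delicate point, requiring the regularity and general position of $A$ extracted above (and in the application one may instead note that the holonomies $A_1,\dots,B_g$ admit no common nonzero centralizer in $\mathfrak{t}$). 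I would therefore present the clean self-centralizing argument as the core and isolate this transversality as the step to be discharged case by case.
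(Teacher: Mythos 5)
Your reduction to the injectivity of $f|_{\mathfrak{t}}$, and then to the transversality statement $\mathfrak{t}\cap\mathfrak{g}^A=0$, follows essentially the same route as the paper: the paper also conjugates $A$ into a maximal torus $T_A$, decomposes $\mathfrak{g}$ into root spaces for $\mathfrak{t}_A$, and computes that $f$ acts on $\mathfrak{g}_\alpha$ by the scalar $e^{\alpha(Y)}-1$. But the step you explicitly isolate as ``to be discharged'' --- that $\mathfrak{t}\cap\mathfrak{t}_A=0$ --- is exactly where the paper's own proof breaks down: it asserts that $Y\notin\mathfrak{t}$ means ``in other words, $\mathfrak{t}\subset\bigoplus_{\alpha\neq 0}\mathfrak{g}_\alpha$,'' which is a non sequitur. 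In fact the proposition as stated is \emph{false} in rank $\geq 2$. Take $G=SU(3)$ with $T$ the diagonal torus, and let $A=\operatorname{diag}(B,1)$ with $B\in SU(2)$ non-diagonal in the upper-left block; then $A\notin T$, yet $X=\operatorname{diag}(i,i,-2i)\in\mathfrak{t}$ is scalar on the block of $B$, so $\mathrm{Ad}_A X=X$, the kernel of $f|_{\mathfrak{t}}$ is nonzero, and the $f(e_i)$ are dependent. Your suspicion about shared subtori is also borne out even for regular $A$: choosing a maximal torus $T_A\neq T$ with $T\cap T_A$ containing a circle $S$ (e.g.\ $S=\{\operatorname{diag}(z,z,z^{-2})\}$ and $T_A$ a conjugate of $T$ inside $Z_G(S)$) and $A\in T_A$ regular gives $A\notin T$ but $\mathrm{Lie}(S)\subset\ker f|_{\mathfrak{t}}$. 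So the hypothesis $A\notin T$ alone cannot suffice; some general-position assumption on the pair $(A,T)$ is genuinely needed.

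Given that, your write-up is in the stronger position: the rank-one argument via self-centrality of the maximal torus ($\mathrm{Ad}_A X=X$ for regular $X$ forces $A\in Z_G(X)=T$) is complete and correct, and your proposed repair for the application is the right one --- bijectivity of the joint map $\bar{f}_\rho$ on $\mathfrak{t}$ only requires that no nonzero $X\in\mathfrak{t}$ be centralized simultaneously by all the holonomies $A_1,\dots,B_g$, which is the local freeness of the $T$-action actually used downstream, rather than injectivity of $\mathrm{Ad}_{A}-\mathrm{Id}$ for a single $A\notin T$. One further point in your favor: your description of the fixed space as $\mathfrak{t}_{\mathbb{C}}\oplus\bigoplus_{\alpha(H)\in 2\pi i\mathbb{Z}}\mathfrak{g}_\alpha$ correctly records that the scaling factor $e^{\alpha(Y)}-1$ can vanish for $\alpha\neq 0$ (whenever $\alpha(Y)\in 2\pi i\mathbb{Z}$), a second gap in the paper's proof, which asserts without justification that this factor is nonzero; this is precisely why regularity of $A$ must enter. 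In short: not a gap in your proposal so much as a counterexample-supported diagnosis of a flaw in the statement and the paper's proof, with the correct fix for the intended application.
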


\begin{proof}
    Since $A \not\in T$, then we can assume $A \in T_A$ for some maximal torus. Then we will consider the root decomposition for $\mathfrak{t}_A$:
    $$\mathfrak{g} = \bigoplus_{\alpha \in \mathfrak{t}_A^*} \mathfrak{g}_\alpha,\text{ where } \mathfrak{g}_\alpha = \{X \in \mathfrak{g}| [Y,X] = \alpha(Y)X \text{ for $Y \in \mathfrak{t}_A^*$} \}$$
    Without loss of generality, let $A = exp(Y)$ for some $Y \in \mathfrak{t}_A^*$; by assumption we have $Y \not\in \mathfrak{t} = Lie(T)$.\\
    For $X \in \mathfrak{g}_\alpha$, let $V(t) := Ad_{exp(tY)} X$ be a curve on $\mathfrak{g}$ such that $X(0) = X, X(1) = Ad_A X$. By differentiating with respect to $t$ we obtain:
    $$\frac{d}{dt} V(t) =\frac{d}{dt} Ad_{exp(tY)} X = [Y ,Ad_{exp(tY)} X] = [Y , V(t)]$$
    For $t = 0$, we have $\frac{d}{dt} V(0) = [Y , X] = \alpha(Y)X$ and $[Y, cX] = c\alpha(Y)X$.  For a vector $cX$ in the  $X$ direction,  the result $[Y, cX]$  is still along the $X$ direction. So we obtain $V(t) = c(t)X$ since the direction of $V$ will not change. Thus we can simplify the above differential:
    \begin{align*}
      \frac{d}{dt} c(t)X &= [Y , c(t)X] = c(t)\alpha(Y)X\\
      \frac{d}{dt} c(t) &= |[Y , c(t)X]|/|X| = c(t)\alpha(Y)\\
      c(0) &= 1
    \end{align*}
    By solving the above equation we have $c(t) = exp(t \alpha(Y))$, $Ad_A X = V(1) = exp(\alpha(Y))X$ then $f(X) = (exp(\alpha(Y))-1)X$.
    Notice that for $X \in \mathfrak{g}_\alpha$, $\alpha(Y) \neq 0$ when $[Y,X] \neq 0$. By the above calculation, the restriction map $f: \mathfrak{g}_\alpha \to \mathfrak{g}_\alpha$ is just scaling by $(exp(\alpha(Y))-1)$, and
    this scaling factor  is nonzero. So the following restriction map is a bijection, mapping linearly independent vectors to linearly independent vectors:
    $$f: \bigoplus_{\alpha \neq 0} \mathfrak{g}_\alpha \to \bigoplus_{\alpha \neq 0} \mathfrak{g}_\alpha$$
    We assume that $Y \not\in \mathfrak{t}$, in other words, $\mathfrak{t} \subset \bigoplus_{\alpha \neq 0} \mathfrak{g}_\alpha$ (Notice the root spaces $\mathfrak{g}_\alpha$ are root spaces associated to a maximal torus containing $Y$, not $\mathfrak{t}$). Since $\{e_i | i =1, \cdots, n\}$ are linearly independent in $\mathfrak{t}$, $f(e_i)$ are also linearly independent.
\end{proof}
We denote $f_\rho : \mathfrak{g} \to \mathfrak{g}^{2g}$ by
$f_\rho(X) = (A_1^{-1} X A_1- e_i, \cdots, B_g^{-1} X B_g - e_i)$. Since we have $[A_1,B_1]\cdots [A_g, B_g] = \beta \neq e$, there exist $A_i$ or $B_i$ that is not in $T$. So from proposition \ref{final} we have that the restriction map $\bar{f}_\rho : \mathfrak{t} \to f_\rho(\mathfrak{t})$ is a bijection.\\
The linearly independent vectors $f_\rho(e_i)$ describing the vertical direction of $Y_\beta \to Y_\beta/T$, we can perform the Gram–Schmidt process to obtain an orthonormal basis $\{w_i | i = 1 ,\cdots n\}$ for the vertical direction. Since $w_i$ are linear combinations of $f_\rho(e_1), \cdots, f_\rho(e_n)$, $w_i \in f_\rho(\mathfrak{t})$,   we have the following natural connection (as a projection from  vectors to the vertical direction):
\begin{align}
    \omega &= \sum_{i = 1,\cdots, g; j = 1,\cdots, n} \langle \theta_{A_i}, w_j \rangle \bar{f}_\rho^{-1} (w_i) + \langle \theta_{B_i}, B_i^{-1} w_j \rangle \bar{f}_\rho^{-1}(w_i)
\end{align}
    $\theta_{A_i} = pr^*_{A_i} (\theta)$ (where $pr_{A_i}: Y_{\beta exp(\Lambda)} \to G$ is the projection map to the space spanned by $A_i$). Similarly, $\theta_{B_i} = pr^*_{B_i} (\theta) $ (where $pr_{B_i}: Y_{\beta exp(\Lambda)} \to G$ is the projection map to the space spanned by $B_i$).
So the additional generators for cohomology of parabolic bundles are $\Lambda_i = \langle d\omega + \frac{1}{2} [\omega \wedge \omega], e_i\rangle$ (which is completely expressed in terms of Maurer-Cartan forms similar to the other generators in \cite{groupcoho}).

\section{Intersection Pairing}
\subsection{Fibre integration and push-forward map}
First, we fix some notation for fibre integration. Let $\pi : T \to M$ be a fibre bundle with fibre $F$. For simplicity we assume $M, F, T$ are all compact. Let $\alpha \in \Omega^*(M)$ and $\beta \in \Omega^*(T)$ such that for all fibre inclusion maps $i:F \to T$, $i^*(\beta)$ represents the same cohomology class. Then we have:
$$\int_T \beta \wedge \pi^*(\alpha) = \int_M \left( \int_F i^*(\beta) \right) \alpha = \left( \int_M \alpha \right) \left( \int_F i^*(\beta) \right)$$
We observe that our generators for the moduli space of parabolic bundles are either pullbacks of generators on the base space or forms satisfying the Leray-Hirsch condition (recall Theorem \ref{LH}) corresponding to $\beta$. As a result, the intersection pairing of the moduli space of parabolic bundles can be decomposed into intersection pairings of the moduli space of parabolic bundles and intersection pairing of the fibres.\\

\subsection{Intersection pairing of $G/T$}
Let $X \in \mathfrak{t}$ be a regular element in Lie algebra of maximal torus $T$, such that the orbit $\mathcal O_X = \{g^{-1}Xg | g \in G\} \cong G/T$ and we have the following explicit map:
\begin{align*}
    h: G/T &\to \mathcal O_X\\
    Tg &\mapsto g^{-1} X g
\end{align*}
Notice that we let the $G/T = \{Tg|g \in G\}$ be the right cosets. Let $\theta$ denote the right Maurer-Cartan form for $G$. We also identify the orbit of $\mathcal O_X$ with the coadjoint orbit of $X^* = \langle X, -\rangle \in \mathfrak{t}$ using the Killing form.
Let's compare the KKS form $\omega_{KKS}$ on $\mathcal O_X$ with $d\langle X, \theta \rangle$ on $G/T$.
\begin{prop}

    For $\omega_{KKS}$ on the orbit space $\mathcal O_X$, we have $h^*(\omega_{KKS}) =  -d\langle X, \theta \rangle$.
\end{prop}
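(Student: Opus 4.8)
The plan is to pull everything back to $G$ along the quotient map $q : G \to G/T$ and verify the identity there, since the Maurer--Cartan forms live on $G$. Because $q$ is a surjective submersion, $q^*$ is injective on forms, so it suffices to prove $q^*h^*\omega_{KKS} = -d\langle X,\theta\rangle$ as $2$-forms on $G$. I set $\phi := h\circ q : G \to \mathcal O_X$, so that $\phi(g) = g^{-1}Xg = Ad_{g^{-1}}X$, write $\theta$ for the right Maurer--Cartan form $dg\,g^{-1}$, and introduce the left one $\vartheta := g^{-1}dg$; the two are related by $\theta = Ad_g\circ\vartheta$.

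First I would differentiate the orbit map. For $v \in T_gG$ with $\zeta := \vartheta(v) = g^{-1}\dot g$, a product-rule computation gives
$$d\phi_g(v) = \frac{d}{ds}\Big|_{0}\, g(s)^{-1}Xg(s) = [\,Ad_{g^{-1}}X,\ \zeta\,] = [\phi(g),\zeta],$$
so that, writing $\xi := \phi(g)$, we have $d\phi_g(v) = -ad_\zeta\,\xi$. This exhibits the image as a fundamental vector field of the adjoint action at $\xi$, which is exactly the shape needed to feed into the KKS pairing. Inserting a second vector $w$ with $\eta := \vartheta(w)$ and using the Killing-form transport of the KKS formula $\omega_{KKS}(\xi)(ad_A\xi, ad_B\xi) = \pm\langle\xi,[A,B]\rangle$ yields
$$\phi^*\omega_{KKS}(v,w) = \pm\,\langle\xi,[\zeta,\eta]\rangle = \pm\,\langle Ad_{g^{-1}}X,[\zeta,\eta]\rangle.$$

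Next I would compute the right-hand side independently. Applying the Maurer--Cartan structure equation for the right form, $d\theta = \tfrac12[\theta,\theta]$, together with $Ad$-invariance of $\langle\cdot,\cdot\rangle$ and the relations $\theta(v)=Ad_g\zeta$, $\theta(w)=Ad_g\eta$, I obtain
$$d\langle X,\theta\rangle(v,w) = \langle X,[\theta(v),\theta(w)]\rangle = \langle X,Ad_g[\zeta,\eta]\rangle = \langle Ad_{g^{-1}}X,[\zeta,\eta]\rangle.$$
Comparing the two displays gives $\phi^*\omega_{KKS} = \pm\, d\langle X,\theta\rangle$ on $G$, with the minus sign in the statement being precisely the one fixed by the KKS convention (coadjoint action together with the chosen orientation). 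Since the left-hand side $q^*h^*\omega_{KKS}$ is basic for $q$, the right-hand side is basic as well, and injectivity of $q^*$ lets me descend the identity to $G/T$, giving $h^*\omega_{KKS} = -d\langle X,\theta\rangle$.

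The genuinely delicate point is the bookkeeping of signs and sides: the choice of left versus right Maurer--Cartan form, the direction of the fundamental vector field $[\xi,\zeta]$ versus $[\zeta,\xi]$, and the sign in the definition of $\omega_{KKS}$ all interact, and only a consistent set of conventions produces the minus sign in the claim rather than a plus. I would therefore fix these choices at the outset and track them carefully through the differentiation step and the KKS evaluation. A secondary point meriting one line is well-definedness of $h$ on right cosets: for $t\in T$ one has $h(T(tg)) = (tg)^{-1}X(tg) = g^{-1}(Ad_{t^{-1}}X)g = g^{-1}Xg$ since $Ad_{t^{-1}}X = X$ for $X\in\mathfrak{t}$, so both $h$ and the descended identity on $G/T$ are legitimate.
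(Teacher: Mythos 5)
Your overall strategy is sound and genuinely different from the paper's: the paper first shows both sides are invariant under the right $G$-action, which reduces the claim to a single point $Te$, and then extracts the identity there from a Lie-derivative (Cartan-formula) manipulation applied to $\langle X,\theta(\widetilde{Y_2})\rangle$; you instead compute both sides globally on $G$ via the orbit map $\phi = h\circ q$ and the Maurer--Cartan structure equation, then descend by injectivity of $q^*$. Your two computations are individually correct: the differential $d\phi_g(v) = [\phi(g),\zeta]$ is right, and so is $d\langle X,\theta\rangle(v,w) = \langle X,[\theta(v),\theta(w)]\rangle = \langle Ad_{g^{-1}}X,[\zeta,\eta]\rangle$ for the right-invariant form $\theta = dg\,g^{-1}$. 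The descent step and the well-definedness remark about $h$ are also fine.

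However, there is a genuine gap, and it sits exactly where the content of the proposition lies: you never establish the sign. Every key display carries a ``$\pm$'', and the final minus sign is asserted to be ``fixed by the KKS convention'' rather than derived; equality up to sign is the easy half, and the minus sign \emph{is} the theorem. Worse, the gap is not harmless bookkeeping. If you finish your computation using the convention stated in the paper's own proof, namely $\omega_{KKS}|_\xi(Y_1^{\#},Y_2^{\#}) = \langle\xi,[Y_1,Y_2]\rangle$ with $Y^{\#}|_\xi = [\xi,Y]$ the fundamental field of the right action, then your tangent vector $d\phi_g(v) = [\xi,\zeta]$ is precisely $\zeta^{\#}|_\xi$, so your first display becomes $\phi^*\omega_{KKS}(v,w) = +\langle\xi,[\zeta,\eta]\rangle$ --- identical to your second display. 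Your argument then yields $h^*\omega_{KKS} = +\,d\langle X,\theta\rangle$, the opposite of the claimed sign. So you cannot close the proof by promising to ``fix conventions consistently'': with the paper's stated conventions your method contradicts the stated sign, and you are obliged either to adopt the opposite KKS sign convention explicitly, or to locate and resolve the discrepancy. (For comparison, the paper's own minus sign comes from the step $\theta(\widetilde{Y_2}) = Y_2 = \mathrm{const}$, which holds for the left-invariant form $g^{-1}dg$ but fails for the right-invariant form $dg\,g^{-1}$, while only the latter makes $d\langle X,\theta\rangle$ basic for right cosets $Tg$. A complete proof must confront this tangle head-on; yours, as written, defers it indefinitely, which is precisely the missing mathematics.)
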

\begin{proof}
    We first consider the right equivariant $G$ action on $\mathcal O_X$ and $G/T$:
    $$G/T \curvearrowleft G: (T g).g_1 =T (g g_1) $$
    $$\mathcal O_X \curvearrowleft G: (g^{-1} X g).g_1 = (g g_1)^{-1} X (g g_1)$$
    Notice that the right Maurer-Cartan form $\theta$ is invariant under the right Lie group action. By definition of the KKS form, $\omega_{KKS}|_{g^{-1} X g} (Y_1^{\#}, Y_2^{\#}) = \langle g^{-1} X g, [Y_1, Y_2]\rangle$, where $Y_1,Y_2 \in \mathfrak{g}$ and $Y_1^{\#}, Y_2^{\#}$ are fundamental vector fields generated by $Y_1, Y_2$ using the right $G$ action. We have:
    \begin{align*}
        &g^* (\omega_{KKS}|_{g^{-1}Xg}) (Y_1^{\#}|_X, Y_2^{\#}|_X)\\
        &= \omega_{KKS}|_{g^{-1}Xg} (g_* (Y_1^{\#}|_X), g_*(Y_2^{\#}|_X))\\
        &= \omega_{KKS}|_{g^{-1}Xg} (\left.\frac{d}{dt}\right|_{t = 0} g^{-1} exp(-t Y_1) X exp(t Y_1) g,\\
        &\qquad \left.\frac{d}{dt}\right|_{t = 0} g^{-1}exp(-t Y_2)X exp(t Y_2) g)\\
        &= \omega_{KKS}|_{g^{-1}Xg} (g^{-1} (Y_1^{\#}|_X)g, g^{-1}(Y_2^{\#}|_X)g)\\
        &= \langle g^{-1}Xg, [g^{-1} Y_1 g, g^{-1}Y_2g]\rangle\\
        &= \langle X, [Y_1, Y_2]\rangle = \omega_{KKS}|_X (Y_1^{\#}|_X, Y_2^{\#}|_X)
    \end{align*}
    We obtain that $g^*(\omega_{KKS}) = \omega_{KKS}$, in other words it is invariant under the right group action. In order to show $h^*(\omega_{KKS}) = \langle X, \theta \rangle$, we only need to show that $h^*(\omega_{KKS}|_{X}) = d\langle X, \theta \rangle|_{Te}$. Consider $Y_1, Y_2 \in \mathfrak{g}$, and let $Y_i^{\#}, \widetilde{Y_i}$ be the fundamental vector fields on $\mathcal O_X$ and $G/T$. On the one hand, we have $\mathcal{L}_{\widetilde{Y_1}} \langle X, \theta(\widetilde{Y_2}) \rangle= \mathcal{L}_{\widetilde{Y_1}} \langle X, Y_2 \rangle = 0$. On the other hand we can see using Cartan's formula:
    \begin{align*}
        \left.\mathcal{L}_{\widetilde{Y_1}}\right|_{Te} \langle X, \theta(\widetilde{Y_2}) \rangle &= (d \langle X, \theta(-) \rangle)(\widetilde{Y_1},\widetilde{Y_2}) + \langle X, \theta(\mathcal{L}_{\widetilde{Y_1}} (\widetilde{Y_2})) \rangle\\
        &= \langle X, d\theta(-) \rangle)(\widetilde{Y_1},\widetilde{Y_2}) + \langle X, \theta( [ \widetilde{Y_1}, \widetilde{Y_2}]) \rangle\\
        &= \langle X, d\theta(\widetilde{Y_1},\widetilde{Y_2}) \rangle) + \langle X, [Y_1,Y_2] \rangle\\
        &= (d\langle X, \theta\rangle + h^*(\omega_{KKS})) (\widetilde{Y_1}, \widetilde{Y_2})
    \end{align*}
    We finally obtain that $d\langle X, \theta\rangle + h^*(\omega_{KKS}) = 0$.
\end{proof}
Let us take a closer look at the $U(n)$ case now (since $U(n)/T_{U(n)} = SU(n)/T_{SU(n)}$, the result also applies to $SU(n)$ case). We find the following explicit orthonormal basis for $\mathfrak{u}(n)$:
\begin{align*}
    e_j &= [(E_j)_{kl}], (E_j)_{kl} =\begin{cases}
        \frac{i}{\sqrt{2}},& k=l=j\\
        0, &\text{ otherwise}
    \end{cases} =
\bbordermatrix{ & & \text{\scriptsize j-th} &\cr
       & & &\cr
      \text{\scriptsize j-th}& & \frac{i}{\sqrt{2}} & \cr
       &  & &}\\
    u_{jk} &= [(U_{jk})_{lp}], (U_{jk})_{lp} =\begin{cases}
        \frac{i}{\sqrt{2}},& j=l,k=p\\
        \frac{i}{\sqrt{2}},& j=p,k=l\\
        0, &\text{ otherwise}
    \end{cases}=
    \bbordermatrix{ & & \text{\scriptsize k-th}&  \text{\scriptsize j-th} &\cr
       & & & &\cr
      \text{\scriptsize k-th}& &  &\frac{i}{\sqrt{2}} & \cr
       \text{\scriptsize j-th} & & \frac{i}{\sqrt{2}} & & \cr
       &  & & &}\\
    v_{jk} &= [(V_{jk})_{lp}], (U_{jk})_{lp} =\begin{cases}
        \frac{1}{\sqrt{2}},& j=l,k=p\\
        \frac{-1}{\sqrt{2}},& j=p,k=l\\
        0, &\text{ otherwise}
    \end{cases}=
    \bbordermatrix{ & & \text{{\scriptsize k-th}}&  \text{\scriptsize j-th} &\cr
       & & & &\cr
      \text{\scriptsize k-th}& &  &\frac{1}{\sqrt{2}} & \cr
       \text{\scriptsize j-th} & & \frac{-1}{\sqrt{2}}& & \cr
       &  & & &}\\
    &\text{(where $j>k$)}
\end{align*}
Notice that  the $e_j$ provide a basis for $\mathfrak{t}$. Consider the tangent space of the homogeneous space $T_{Te} G/T = \mathfrak{g}/\mathfrak{t} = \mathfrak{t}^{\perp}$ which is spanned by $\{u_{jk},v_{jk}\}$. Consider the local coordinates $(x_{jk},y_{jk})$ for $G/T$ near $Te$, such that $\left. \frac{\partial}{\partial x_{jk}}\right|_{Te} = u_{jk}, \left. \frac{\partial}{\partial y_{jk}}\right|_{Te} = v_{jk}$.\\
It is easy to do the following calculation using  Proposition \ref{final}:
\begin{align*}
    d\langle e_i, \theta \rangle (\partial/\partial x_{jk},\partial/\partial x_{lm}) &= \langle e_i, [u_{jk}, u_{lm}] \rangle = 0\\
    d\langle e_i, \theta \rangle (\partial/\partial y_{jk},\partial/\partial y_{lm}) &= \langle e_i, [v_{jk}, v_{lm}] \rangle = 0\\
    d\langle e_i, \theta \rangle (\partial/\partial x_{jk},\partial/\partial y_{lm}) &= \langle e_i, [v_{jk}, u_{lm}] \rangle = \begin{cases}
        1, i = k = m, j = l\\
        -1, i = j=l, k = m\\
        0, \text{otherwise}
    \end{cases}
\end{align*}
We obtain locally near $Te$:
$$d \theta_i = d\langle e_i, \theta \rangle = \sum_{k > i}dx_{ki}\wedge dy_{ki} - \sum_{j < i} dx_{ji}\wedge dy_{ji}$$
Let $X = \sum a_i e_i$. We have that:
$$d\langle X, \theta \rangle = \sum_{i=1,\cdots,n} a_i d\langle e_i, \theta \rangle= \sum_{j > k} (a_k - a _j) dx_{jk} \wedge dy_{jk}$$
We can now perform the following calculation for the volume form:
\begin{align}
    (d\langle X, \theta \rangle)^n &= \left ( \sum_{j > k} (a_k - a _j) dx_{jk} \wedge dy_{jk}\right)^n\nonumber\\
    &= \left(\prod_{j>k}(a_k-a_j)\right) \bigwedge_{j>k} dx_{jk} \wedge dy_{jk}\nonumber\\
    & = C_0\prod_{j>k}(a_k-a_j) vol_{G/T}\nonumber\\
    \int_{G/T} (d\langle X, \theta \rangle)^n & = C_1 \prod_{j>k}(a_k-a_j) \label{intpair}
\end{align}
On the other hand, from Proposition 6.1, we know that $(d\langle X, \theta \rangle)^n = \omega_{KKS}^n = vol_{\mathcal O_X}$. By fixing a $X_0 = \sum a'_i e_i$, we can determine the constant $C_1$ above:
\begin{align*}
    \int \left(  \sum_{i=1,\cdots,n} a'_i d\langle e_i, \theta \rangle\right )^n &= \int vol_{\mathcal O_{X_0}} = Vol(\mathcal O_{X_0})\\
    C_1 &= Vol(\mathcal O_{X_0})/ \prod_{j>k}(a'_k-a'_j)
\end{align*}
Considering $a_i$ as a variable, the above intersection pairing formula (\ref{intpair}) is a polynomial with variable $a_i$. In order to retrieve the coefficient for each terms, we can take the differential to eliminate other terms. Using the trick of taking a derivative with respect to  $a_i$, we can obtain the following result:
\begin{align*}
    \frac{\partial}{\partial a_j} \left(\sum_{i=1,\cdots,n}a_i d\theta_i \right)^n &= n \left(\sum_{i=1,\cdots,n}a_i d\theta_i \right)^{n-1}d\theta_j\\
    \frac{\partial^{\alpha_1}}{\partial a_1^{\alpha_1}}\cdots\frac{\partial^{\alpha_n}}{\partial a_n^{\alpha_n}} \left(\sum_{i=1,\cdots,n}a_i d\theta_i \right)^n & = n ! d\theta_1^{\alpha_1}\cdots d\theta_n^{\alpha_n}\\
    \int_{G/T} d\theta_1^{\alpha_1}\cdots d\theta_n^{\alpha_n} &= \frac{1}{n!}\int_{G/T}\frac{\partial^{\alpha_1}}{\partial a_1^{\alpha_1}}\cdots\frac{\partial^{\alpha_n}}{\partial a_n^{\alpha_n}} \left(\sum_{i=1,\cdots,n}a_i d\theta_i \right)^n\\
    & = \frac{C_1}{n!} \frac{\partial^{\alpha_1}}{\partial a_1^{\alpha_1}}\cdots\frac{\partial^{\alpha_n}}{\partial a_n^{\alpha_n}}\prod_{j>k}(a_k-a_j).
\end{align*}
Combined with results on intersection pairings of the moduli space of flat connections, we have  the following well defined intersection pairing formula for arbitrary combinations of generators of the moduli space of parabolic bundles:
\begin{theorem}
For generators from Corollary \ref{cor6.0.1}  and arbitrary polynomials $f(p^*(a_r), p^*(b^j_r), p^*(f_r))$, the intersection pairing formula is
    \begin{align}
  &\int_{\mathcal{M}_{\beta,1}(\Lambda)} f(p^*(a_r), p^*(b^j_r), p^*(f_r)) \Lambda_1^{\alpha_1} \cdots \Lambda_n^{\alpha_n}\nonumber\\
    = \quad& \frac{C_1}{n!} \frac{\partial^{\alpha_1}}{\partial a_1^{\alpha_1}}\cdots\frac{\partial^{\alpha_n}}{\partial a_n^{\alpha_n}}\prod_{j>k}(a_k-a_j)\int_{\mathcal{M}_{\beta}} f(a_r, b^j_r, f_r) \label{intpair2} 
\end{align}

\end{theorem}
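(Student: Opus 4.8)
The plan is to recognize the integrand as the product of a class pulled back from the base and a product of fibre classes, and then to invoke the fibre integration (projection) formula stated at the opening of Section 7.1. This reduces the computation to two factors that have already been determined elsewhere in the paper: the intersection pairing on $\mathcal{M}_\beta$ and the explicit $G/T$ pairing computed in the preceding subsection. First I would rewrite the integrand: since $p^*$ is a ring homomorphism, $f(p^*(a_r), p^*(b^j_r), p^*(f_r)) = p^*\bigl(f(a_r, b^j_r, f_r)\bigr)$, so setting $\alpha := f(a_r, b^j_r, f_r) \in H^*(\mathcal{M}_\beta)$ and $\beta := \Lambda_1^{\alpha_1}\cdots\Lambda_n^{\alpha_n}$, the integrand becomes $p^*(\alpha)\smile\beta$, exactly the shape to which the projection formula applies.

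Next I would verify the hypothesis of that formula, namely that $i^*(\beta)$ represents the same cohomology class for every fibre inclusion $i\colon G/T \hookrightarrow \mathcal{M}_{\beta,1}(\Lambda)$. This is precisely the Leray--Hirsch condition (Theorem \ref{LH}), which holds for the $\Lambda_i$ by Proposition \ref{final} together with Corollary \ref{cor6.0.1}: the $\Lambda_i$ are Chern--Weil classes built so that their fibre restrictions generate $H^*(G/T)$, and by the naturality argument in Proposition \ref{final} one has $i^*(\Lambda_i) = d\theta_i$ on each fibre independently of the chosen fibre. Compactness of $\mathcal{M}_\beta$ and of $G/T$, hence of $\mathcal{M}_{\beta,1}(\Lambda)$ (which is needed to apply the formula), follows since $\beta \neq e$.

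Applying the projection formula with base $\mathcal{M}_\beta$, fibre $G/T$, and total space $\mathcal{M}_{\beta,1}(\Lambda)$ then splits the integral as
\[
\int_{\mathcal{M}_{\beta,1}(\Lambda)} p^*(\alpha)\smile\beta \;=\; \Bigl(\int_{\mathcal{M}_\beta}\alpha\Bigr)\Bigl(\int_{G/T} i^*(\beta)\Bigr).
\]
The fibre factor is $\int_{G/T} d\theta_1^{\alpha_1}\cdots d\theta_n^{\alpha_n}$, which was evaluated just above via the derivative trick applied to the volume computation (\ref{intpair}), yielding $\frac{C_1}{n!}\,\partial_{a_1}^{\alpha_1}\cdots\partial_{a_n}^{\alpha_n}\prod_{j>k}(a_k-a_j)$. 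The base factor is the known pairing $\int_{\mathcal{M}_\beta} f(a_r, b^j_r, f_r)$ coming from the moduli space of flat connections. Multiplying the two factors produces exactly (\ref{intpair2}).

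The main obstacle I anticipate is not any single calculation but the careful justification that the projection formula genuinely applies in this setting. One must confirm that the Chern--Weil representatives $\Lambda_i$ restrict on \emph{every} fibre to the same class $d\theta_i$ (the Leray--Hirsch/naturality input of Proposition \ref{final}), so that the fibre integral $\int_{G/T} i^*(\beta)$ is well defined and coincides with the pairing already computed on $G/T$. One should also track degrees: the pairing is nonzero precisely when $\beta$ is top-degree on $G/T$ and $\alpha$ is top-degree on $\mathcal{M}_\beta$, and the additivity of dimensions in the fibre bundle makes these two top-degree conditions exactly match top degree on $\mathcal{M}_{\beta,1}(\Lambda)$, so no correction terms arise and the factorization is clean.
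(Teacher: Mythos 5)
Your proposal is correct and follows essentially the same route as the paper: the paper's argument is precisely the combination of the fibre-integration formula of Section 7.1 (applicable because the $\Lambda_i$ satisfy the Leray--Hirsch condition from Corollary \ref{cor6.0.1} and Proposition \ref{final}, while the remaining generators are pullbacks $p^*(\cdot)$ from $\mathcal{M}_\beta$), together with the derivative-trick evaluation of $\int_{G/T} d\theta_1^{\alpha_1}\cdots d\theta_n^{\alpha_n}$ carried out just before the theorem. Your additional remarks on degree bookkeeping and on fibre-independence of $i^*(\beta)$ only make explicit what the paper leaves implicit.
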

The intersection pairing over $\mathcal{M}_\beta$ is well studied, so $\int_{\mathcal{M}_{\beta}} f(a_r, b^j_r, f_r)$ is well studied.

\end{document}